\documentclass{amsart}
\usepackage{enumitem}
\usepackage[a4paper,margin= 3 cm]{geometry}
\usepackage{tikz-cd}
\usepackage[hyphens]{url}
\usepackage{amsthm,amsmath,amssymb,amsfonts,dsfont}
\usepackage[colorlinks, linkcolor=blue, citecolor=blue]{hyperref}
\usepackage[indentafter]{titlesec}
\usepackage{mathrsfs}
\usepackage{bm}
\usepackage{titlesec}

\numberwithin{equation}{section}

\titleformat{name=\section}{}{\thetitle.}{0.8em}{\centering\scshape}

\titleformat{\subsection}
  {\normalfont\bfseries}
  {\thesubsection}
  {1em}
  {}

\theoremstyle{plain}
\newtheorem{theorem}{\textbf{Theorem}}[section]
\newtheorem{lemma}[theorem]{\textbf{Lemma}}
\newtheorem{proposition}[theorem]{\textbf{Proposition}}

\newtheorem{letterthm}{Theorem}

\theoremstyle{definition}
\newtheorem{definition}[theorem]{\textbf{Definition}}
\newtheorem{example}[theorem]{\textbf{Example}}
\newtheorem{remark}[theorem]{\textbf{Remark}}

\def\a{\alpha}
\def\b{\beta}

\title[On the K-theory of algebraic Cuntz-Pimsner rings]{On the K-theory of algebraic Cuntz-Pimsner rings}
\author{Thibaut Lescure}
\address{LMNO\\ 6 Boulevard Maréchal Juin\\ 14000 Caen \\ FRANCE}
\email{thibaut.lescure@etu.unicaen.fr}
\begin{document}

\begin{abstract}
We establish a long exact sequence for the homotopy K-theory groups of the algebraic Cuntz-Pimsner rings introduced by Carlsen and Ortega \cite{CO11} by adapting Pimsner's original proof \cite{Pim97} to Cuntz's formalism.
\end{abstract}
 \maketitle

\section{Introduction}

In \cite{Pim97}, Pimsner introduced a class of $C^*$-algebras generalizing both crossed products by $\mathbb{Z}$ and graph $C^*$-algebras. The construction takes as input a coefficient $C^*$-algebra $A$ and a right Hilbert module $\mathcal{H}$ together with a left action of $A$ on $\mathcal{H}$ by adjointable operators. By seeing the elements of $\mathcal{H}$ as creation and annihilation operators on the Fock space one defines a $C^*$-algebra called the Toeplitz(-Pimsner) algebra $\mathcal{T}_{\mathcal{H}}$. If $\mathcal{I}$ is a $C^*$-ideal of $A$ that acts on the left on $\mathcal{H}$ by compact operators, then quotienting by a suitable ideal yields the relative Cuntz-Pimsner algebra $\mathcal{O}_{\mathcal{H},\mathcal{I}}$. Using Kasparov's bivariant K-theory, Pimsner was able to build an inverse in K-theory to the inclusion map $A \rightarrow \mathcal{T}_{\mathcal{H}}$. This result together with a computation of the induced maps $K_*(\mathcal{I}) \rightarrow K_*(A)$ gives a long exact sequence of K-theory groups generalizing both that of Pimsner-Voiculescu and that for graph $C^*$ algebras.

Carlsen and Ortega \cite{CO11} introduced an analogue of Pimsner's constructions to the purely algebraic setting in which $A$ is replaced by a (non-unital, non-commutative) ring $R$ and $\mathcal{H}$ is replaced by an $R$-system $\mathcal{X} = (X,X',g)$ which consists of a pair of $R$-bimodules and an $R$-bimodule map $X' \otimes_R X \rightarrow R$. 
In \cite{CT06}, Corti\~nas and Thom introduced an algebraic analogue of Kasparov's bivariant K-theory carrying the classical approach of Cuntz  \cite{CMR07,C97} to a purely discrete setting. Their theory is related to Weibel's homotopy K-theory \cite{Wei89} in the same way as operator K-theory is related to Kasparov's KK-theory. In the same paper, they established an algebraic analogue of the Pimsner–Voiculescu exact sequence for the homotopy K-theory groups of a crossed product of the form $R \rtimes \mathbb{Z}$. In \cite{ABC09} a long exact sequence of homotopy K-theory groups was established for Leavitt path algebras of a row-finite quiver over an arbitrary ring. These two classes of rings are particular cases of the so-called algebraic Cuntz-Pimsner rings.

We rewrite Pimsner's original proof using a suitable quasi-homomorphism as an inverse in $K$-theory to the inclusion $R \rightarrow \mathcal{T}_\mathcal{X}$. Instead of working directly with homotopy $K$-theory or its bivariant version, we work with a given homotopy invariant, split-exact and $M$-stable functor $E : \mathbf{Rings} \rightarrow \mathbf{Ab}$ (see \cite{CMR07} or \cite{CT06}). We need to ensure that the underlying module structures are sufficiently non-degenerate. In particular the $R$-system will be assumed to satisfy condition (FS) of \cite{CO11} to ensure that the Toeplitz ring is universal. Moreover, for all $R$-system $\mathcal{X}$ and any $M$-stable functor $E$, if $\mathcal{K}_R(\mathcal{X})$ denotes the ring of finite-rank operators then there should be an induced map $E(\mathcal{K}_R(\mathcal{X})) \rightarrow E(R)$. Corner embeddings should be sent to isomorphisms by $E$.

For these reasons, we assume that $R$ has local units, that the $R$-system $\mathcal{X}$ satisfies condition (FS) and comes equipped with a functional homomorphism to a canonical $R$-system $R^{(I)}$ (see Definition 2.12). Under these assumptions we are able to solve these technical problems using the formalism introduced by Burgstaller \cite{Bur25}. We then recall the construction of the Toeplitz and relative Cuntz-Pimsner rings and proceed with the main computation. Our first result is the following. \begin{letterthm}
    Let $R$ be a ring with local units. Let $\mathcal{X}$ be an $R$-correspondence. Let $\mathcal{T}_{\mathcal{X}}$ be the corresponding Toeplitz ring.
    Every homotopy invariant, split-exact and $M$-stable functor $E$ sends the inclusion $R \rightarrow \mathcal{T}_{\mathcal{X}}$ to an isomorphism $E(R) \cong E(\mathcal{T}_{\mathcal{X}})$.
\end{letterthm}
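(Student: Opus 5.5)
The plan is to imitate Pimsner's argument in Cuntz's quasi-homomorphism formalism. Let $\iota\colon R\to\mathcal{T}_{\mathcal{X}}$ be the inclusion. I will construct a quasi-homomorphism $(\alpha,\beta)\colon \mathcal{T}_{\mathcal{X}} \rightrightarrows \mathcal{E} \rhd J$, where $J$ is a ring Morita-equivalent (via corner embeddings) to $R$. Its class then gives a map $\Psi\colon E(\mathcal{T}_{\mathcal{X}})\to E(R)$, using split-exactness and $M$-stability. Finally I will show that $\Psi$ is a two-sided inverse of $E(\iota)$.

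\textbf{Construction.} Let $\mathcal{F}=\bigoplus_{n\ge 0} X^{\otimes n}$ be the algebraic Fock module, and $\mathcal{F}'$ the dual Fock module built from $X'$. The Toeplitz ring acts on $\mathcal{F}$ through creation and annihilation operators. Condition (FS) makes $\mathcal{T}_{\mathcal{X}}$ universal for Toeplitz representations, so this gives a homomorphism $\alpha\colon \mathcal{T}_{\mathcal{X}}\to \mathcal{L}(\mathcal{F})$.

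The second homomorphism $\beta$ is the action on the shifted Fock space $\mathcal{F}_{\ge1}=\bigoplus_{n\ge1}X^{\otimes n}$, extended by zero on the $R$-summand. I then check that $\alpha(t)-\beta(t)$ lies in the ideal $\mathcal{K}_R(\mathcal{F})$ of finite-rank operators. This reduces to checking it on the generators $r\in R$, $x\in X$ and $y\in X'$, where the difference is the rank-one projection onto the degree-$0$ summand, or zero.

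Using the functional homomorphism $\mathcal{X}\to R^{(I)}$ and Burgstaller's formalism, I embed $\mathcal{K}_R(\mathcal{F})$ into a matrix ring $M_{I'}(R)$. This should be a corner embedding, and $E$ sends it to an isomorphism. Local units are needed here. The resulting map is $\Psi=(\alpha,\beta)_*\colon E(\mathcal{T}_{\mathcal{X}})\to E(\mathcal{K}_R(\mathcal{F}))\cong E(R)$.

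\textbf{Composition $\Psi\circ E(\iota)$.} On $R$, $\alpha$ is the diagonal action on all $X^{\otimes n}$ and $\beta$ is the same action without the degree-$0$ term. Both land in $\mathcal{K}_R(\mathcal{F})$ after the usual manipulation: compress to a large enough finite degree, or use that $(\alpha,\beta)$ equals $(\alpha_0\oplus\beta,\,0\oplus\beta)$. By additivity of quasi-homomorphisms for orthogonal sums, the class is that of $\alpha_0\colon R\to \mathcal{K}_R(R)=R$. That map is the identity, so $\Psi\circ E(\iota)=\mathrm{id}$.

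\textbf{Composition $E(\iota)\circ\Psi$.} This is the main obstacle. Following Pimsner, I tensor the quasi-homomorphism with $\mathcal{T}_{\mathcal{X}}$ to get $\mathcal{T}_{\mathcal{X}}\rightrightarrows \mathcal{L}(\mathcal{F}\otimes_R\mathcal{T}_{\mathcal{X}})\rhd \mathcal{K}(\mathcal{F}\otimes_R \mathcal{T}_{\mathcal{X}})$. By naturality, this represents $E(\iota)\circ\Psi$ after identifying $E(\mathcal{K}(\mathcal{F}\otimes_R\mathcal{T}_{\mathcal{X}}))\cong E(\mathcal{T}_{\mathcal{X}})$. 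Then I construct a polynomial homotopy (over $\mathcal{T}_{\mathcal{X}}[t]$) from this pair to a pair $(\alpha',\beta')$ in which $\alpha'$ is $\mathrm{id}\oplus\beta'$ up to conjugation. This gives the class of the identity.

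Pimsner uses a rotation homotopy intertwining $\mathcal{F}\otimes\mathcal{T}$ with $\mathcal{T}\oplus\mathcal{F}_{\ge1}\otimes\mathcal{T}$ via the creation operators, and in the $C^*$ setting this is continuous. Algebraically I would replace it with the standard Cuntz trick. The isometry-like maps $V$ and $V'$, given by multiplication by $x\in X$ and $y\in X'$, satisfy $V'V=\mathrm{id}$ up to local units. Use them to form the $2\times2$ rotation by the unipotent polynomial path $\begin{pmatrix}1&t\\0&1\end{pmatrix}$ and its transpose. Each step must keep differences in the finite-rank ideal, and this is where (FS) and local units are used most heavily.

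With both compositions the identity, $E(\iota)$ is an isomorphism.
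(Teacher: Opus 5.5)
\textbf{Where your proposal matches the paper.} The architecture is the paper's. Your $(\alpha,\beta)$ is the paper's $(\pi_0,\pi_1)$; here $\beta$ must be read as $\pi_0\otimes\mathrm{id}_X$ on $T(X)\otimes_R X\cong\bigoplus_{n\ge1}X^{\otimes n}$, since $\bigoplus_{n\ge1}X^{\otimes n}$ is not invariant under annihilation. Your computation of $\Psi\circ E(\iota)$ via the orthogonal splitting $\pi_0 j=\iota_{R,T(\mathcal X)}+\pi_1 j$ is also the paper's. So is tensoring with $\mathcal T=\mathcal T_{\mathcal X}$ and aiming at $\lambda_1+\pi_1\otimes\mathrm{id}$, where $\lambda_1$ is left multiplication on the degree-$0$ summand $R\otimes_R\mathcal T\cong\mathcal T$.

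Two minor points. First, $\alpha\circ\iota$ and $\beta\circ\iota$ do not land in $\mathcal K_R(\mathcal F)$, since they have infinitely many nonzero diagonal blocks; only your second alternative is valid. Second, $\mathcal K_R(\mathcal F)\to M_{I'}(R)$ is not a corner embedding. It induces an isomorphism only because $R$ is a summand of $\mathcal F$ (Proposition 2.11), and it is cleaner to use $E(\iota_{R,T(\mathcal X)})$ directly.

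\textbf{The gap: the homotopy.} This is the heart of the proof, and your mechanism does not go through. There are no maps $V,V'$ with $V'V=\mathrm{id}$ here. You only have the family of relations $T_\phi T_x=\phi(x)$, one for each pair $(x,\phi)$, and no single isometry between $\mathcal T$ and $X\otimes_R\mathcal T$.

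Nor can any path of conjugations by invertible operators connect $\pi_0\otimes\mathrm{id}$ to $\lambda_1+\pi_1\otimes\mathrm{id}$, because the underlying left $\mathcal T$-modules differ in general. Under $\pi_0\otimes\mathrm{id}$, every vector is killed by all sufficiently long products $T_{\phi_1}\cdots T_{\phi_N}$. On the summand $\mathcal T$ carrying left multiplication, a local unit is not, as soon as $X^{\otimes N}\neq0$ for all $N$.

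\textbf{What is needed.} Define the path directly on generators and use universality (Theorem 2.22). Write $\pi_0\otimes\mathrm{id}=\lambda_0+\pi_1\otimes\mathrm{id}$, with $\lambda_0$ the part involving the degree-$0$ summand, and set $H(r)=r$ together with
\[H(T_x)=(1-t^2)\lambda_0(T_x)+(2t-t^3)\lambda_1(T_x)+(\pi_1\otimes\mathrm{id})(T_x),\quad H(T_\phi)=(1-t^2)\lambda_0(T_\phi)+t\,\lambda_1(T_\phi)+(\pi_1\otimes\mathrm{id})(T_\phi).\]
The identity $t(2t-t^3)+(1-t^2)^2=1$ gives $H(T_\phi)H(T_x)=H(\phi(x))$, and the bimodule relations are immediate. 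So $H$ is a ring homomorphism into $\mathcal L_{\mathcal T}(T(\mathcal X)\otimes_R\mathcal T)[t]$ with $H(0)=\pi_0\otimes\mathrm{id}$ and $H(1)=\lambda_1+\pi_1\otimes\mathrm{id}$.

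Your unipotent instinct is actually the source of these coefficients. For
\[g(t)=\begin{pmatrix}1&t\\0&1\end{pmatrix}\begin{pmatrix}1&0\\-t&1\end{pmatrix}\begin{pmatrix}1&t\\0&1\end{pmatrix}\]
one has $g(t)e_2=(2t-t^3,\,1-t^2)^{T}$ and $e_2^{T}g(t)^{-1}=(t,\,1-t^2)$. But they must enter as scalar coefficients, with creation and annihilation operators receiving different polynomials. This is the polynomial substitute for Pimsner's $\cos^2+\sin^2=1$, not a conjugation of the representation.
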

Let $(E_n)_{n \in \mathbb{Z}} : \mathbf{Rings} \rightarrow \mathbf{Ab}$ be a sequence of homotopy invariant and $M$-stable functors which satisfies excision. This means that any extension of rings \[\begin{tikzcd}
I \arrow[r] & R \arrow[r] & R/I
\end{tikzcd} \]
induces a long exact sequence of the corresponding abelian groups \[\begin{tikzcd}
\ldots \arrow[r] & E_n(I) \arrow[r] & E_n(R) \arrow[r] & E_n(R/I) \arrow[r] & E_{n-1}(I) \arrow[r] & \ldots
\end{tikzcd}\]
Using Theorem A and the fact that the Cuntz-Pimsner ring is a quotient of the Toeplitz ring we obtain \begin{letterthm}
  Let $R$ be a ring with local units. Let $\mathcal{X}$ be an $R$-correspondence. Let $\mathcal{I} \lhd R$ be a two-sided ideal that has local units and acts on $X$ on the left by compact operators. There is a long exact sequence $(n\in \mathbb{Z})$
\[\begin{tikzcd}
\ldots \arrow[r] & E_{n}(\mathcal{I}) \arrow[rr, "E_n(i) - E_n(\mathcal{X})"] &  & E_{n}(R) \arrow[r, "E_n(j)"] & {{ E_{n}(\mathcal{O}_{\mathcal{X},\mathcal{I}}})} \arrow[r] & E_{n-1}(\mathcal{I}) \arrow[r] & \ldots
\end{tikzcd}\]
Here $j : R \rightarrow \mathcal{O}_{\mathcal{X}, \mathcal{I}}$ and $i : \mathcal{I} \rightarrow R$ are the natural inclusions.
\end{letterthm}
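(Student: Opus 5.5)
The plan is to follow Pimsner's strategy: identify the kernel of $\mathcal{T}_{\mathcal{X}} \to \mathcal{O}_{\mathcal{X},\mathcal{I}}$, apply excision, and then use Theorem A to replace $\mathcal{T}_{\mathcal{X}}$ by $R$.

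\textbf{Step 1: the kernel.} Let $\mathcal{J} \lhd \mathcal{T}_{\mathcal{X}}$ be the kernel of the quotient map $\mathcal{T}_{\mathcal{X}} \to \mathcal{O}_{\mathcal{X},\mathcal{I}}$. By construction, $\mathcal{J}$ is generated by the differences $a - \varphi(a)$ for $a \in \mathcal{I}$. Here $\varphi(a) \in \mathcal{K}_R(\mathcal{X}) \subset \mathcal{T}_{\mathcal{X}}$ is the finite-rank operator through which $a$ acts on $X$. I would show that $\mathcal{J}$ is the ring of finite-rank operators on the relative Fock module $\mathcal{F}(\mathcal{X})\mathcal{I} = \bigoplus_{n\ge 0} X^{\otimes n}\otimes_R \mathcal{I}$, that is, $\mathcal{J} \cong \mathcal{K}_R(\mathcal{F}(\mathcal{X})\mathcal{I})$. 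The key point is that $a - \varphi(a)$ acts on the Fock module as $a$ times the projection onto the degree-zero summand. Products of creation and annihilation operators with this element then give exactly the rank-one operators $\theta_{\xi,\eta}$ with $\xi,\eta$ in the relative Fock module.

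\textbf{Step 2: the kernel has the $E$-theory of $\mathcal{I}$.} The inclusion $\mathcal{I} \to \mathcal{J}$, $a \mapsto a - \varphi(a)$, is a corner embedding into $\mathcal{K}_R(\mathcal{F}(\mathcal{X})\mathcal{I})$. Because $\mathcal{I}$ has local units, Burgstaller's formalism together with the functional homomorphism to $R^{(I)}$ (Definition 2.12) lets me realize $\mathcal{J}$ inside a matrix ring $M_\infty(\mathcal{I})$ compatibly with this corner. $M$-stability then makes $E(\mathcal{I}) \to E(\mathcal{J})$ an isomorphism. This is the same mechanism used earlier in the paper to obtain maps $E(\mathcal{K}_R(\mathcal{X})) \to E(R)$.

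\textbf{Step 3: excision and Theorem A.} Apply excision to the extension $\mathcal{J} \to \mathcal{T}_{\mathcal{X}} \to \mathcal{O}_{\mathcal{X},\mathcal{I}}$. Then use Step 2 to replace $E_n(\mathcal{J})$ by $E_n(\mathcal{I})$, and Theorem A to replace $E_n(\mathcal{T}_{\mathcal{X}})$ by $E_n(R)$. Under these identifications the map $E_n(R) \to E_n(\mathcal{O}_{\mathcal{X},\mathcal{I}})$ is $E_n(j)$, since $j$ is the inclusion $R \to \mathcal{T}_{\mathcal{X}}$ followed by the quotient map.

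\textbf{Step 4: the main obstacle.} It remains to identify the composite $E_n(\mathcal{I}) \to E_n(\mathcal{J}) \to E_n(\mathcal{T}_{\mathcal{X}}) \cong E_n(R)$ with $E_n(i) - E_n(\mathcal{X})$. Here $E_n(\mathcal{X})$ is the map induced by $\varphi|_{\mathcal{I}}\colon \mathcal{I} \to \mathcal{K}_R(\mathcal{X})$ followed by $E(\mathcal{K}_R(\mathcal{X})) \to E(R)$.

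I would first write $a \mapsto a - \varphi(a)$ as the difference of the two homomorphisms $a \mapsto a$ and $a \mapsto \varphi(a)$ into $\mathcal{T}_{\mathcal{X}}$. These have orthogonal ranges after compression to the Fock module, so they form a quasi-homomorphism, and by split-exactness $E$ of this difference is $E(i) - E(\varphi)$ as maps into $E(\mathcal{T}_{\mathcal{X}})$.

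Then I compose with the inverse of $E(R)\to E(\mathcal{T}_{\mathcal{X}})$ from Theorem A, which is the quasi-homomorphism constructed in its proof. The first term returns $E(i)$. For the second term I expect the inverse, restricted to $\mathcal{K}_R(\mathcal{X}) \subset \mathcal{T}_{\mathcal{X}}$, to agree with the canonical map $E(\mathcal{K}_R(\mathcal{X})) \to E(R)$. To show this I would unwind the inverse on $\mathcal{K}_R(\mathcal{X})$: there the two homomorphisms of the quasi-homomorphism differ by a shift of the Fock space. That shift is removed by a rotation homotopy in $M_2$, as in Pimsner's argument, and the remaining term is exactly the corner embedding defining $E(\mathcal{K}_R(\mathcal{X}))\to E(R)$.

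The delicate bookkeeping is checking that these homotopies live in genuine polynomial rings $\mathcal{T}_{\mathcal{X}}[t]$, which homotopy invariance requires. Local units and condition (FS) should make this possible.
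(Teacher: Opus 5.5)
Your proof is correct, and Steps 1--3 are the paper's argument: Proposition 2.17 identifies the kernel with $\mathcal{K}_R(T(\mathcal{X})\cdot\mathcal{I})$, Proposition 2.11 makes the corner embedding $\gamma\colon\mathcal{I}\to\mathcal{T}_{\mathcal{X}}$, $a\mapsto aP_0$, an isomorphism under $E$, and excision together with Theorem A gives the sequence. The real difference is in Step 4.

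\textbf{The paper's Step 4 (Lemma 3.3).} The paper applies the inverse quasi-homomorphism $\pi=(\pi_0,\pi_1)$ of Theorem A directly to $\gamma$. One gets $\pi_0(aP_0)=\iota_{R,T(\mathcal{X})}(a)$ and $\pi_1(aP_0)=\iota_{\mathcal{X},T(\mathcal{X})}(\Delta(a))$, where $\Delta$ is your $\varphi$. Both already lie in $\mathcal{K}_R(T(\mathcal{X}))$, so \cite[Proposition 3.3]{CMR07} gives $E(\iota_{R,T(\mathcal{X})})E(i)-E(\iota_{\mathcal{X},T(\mathcal{X})})E(\Delta)$ in one step. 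A commuting triangle of corner embeddings then turns the second term into $E(\iota_{R,T(\mathcal{X})})E(\mathcal{X})$.

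\textbf{Your Step 4.} You split first inside $\mathcal{T}_{\mathcal{X}}$ and then need $E(j)^{-1}E(\kappa)$, where $j\colon R\to\mathcal{T}_{\mathcal{X}}$ and $\kappa\colon\mathcal{K}_R(\mathcal{X})\to\mathcal{T}_{\mathcal{X}}$, $x\otimes\phi\mapsto T_xT_\phi$. This works, and it isolates the useful fact that the inverse of $E(j)$, restricted to $\mathcal{K}_R(\mathcal{X})$, is the canonical map $E(\mathcal{K}_R(\mathcal{X}))\to E(R)$. It is also easier than you expect. The operators $\pi_0(T_xT_\phi)$ and $\pi_1(T_xT_\phi)$ coincide in degrees $\ge 2$ and differ only by $\theta_{x,\phi}$ in degree one. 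Hence $\pi_0\kappa=\iota_{\mathcal{X},T(\mathcal{X})}+\pi_1\kappa$ with orthogonal summands, exactly as for $\pi\circ j$ in Theorem A. No rotation homotopy and no ``shift'' are needed; the homotopy bookkeeping you worry about does not arise here.

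\textbf{A correction to your orthogonality claim.} The maps $a\mapsto a$ and $a\mapsto\kappa(\Delta(a))$ are not orthogonal. The orthogonal pair is $\gamma$ and $\kappa\circ\Delta$, whose sum is $j\circ i$ because $\kappa(\Delta(a))=a(1-P_0)$. It is this decomposition that gives $E(\gamma)=E(j)E(i)-E(\kappa)E(\Delta)$.
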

Theorems A and B are both true if we replace the sequence of functors $(E_n)_{n \in \mathbb{Z}}$ by either Weibel's homotopy K-theory \cite{Wei89} or by periodic cyclic homology \cite{CQ97}.

\section{Preliminaries}\label{Preliminaries}

\subsection{Burgstaller's functional modules and $M$-stable functors}

We fix a (not necessarily unital or commutative) ring $R$.
\begin{definition}
   A right functional $R$-module is a triple $(X,X',g)$ where $X$ is a right $R$-module, $X'$ is a left $R$-module and $g : X'\otimes_{\mathbb{Z}}X \rightarrow R$ is an $R$-bimodule map (often called the scalar product of the functional module).
    For all $f\in \text{Hom}_{R}(X,X)$ we say that $f$ is adjointable with adjoint $f^{*}\in \text{Hom}_{R}(X',X')$ if
    \[\forall x \in X, \forall \phi \in X', g(\phi \otimes f(x)) = g(f^{\ast}(\phi) \otimes x)\]
        We call $g$ non-degenerate if
    \[(\forall \phi \in X', g(\phi \otimes x) = 0) \implies x = 0 \text{ and }(\forall x \in X, g(\phi \otimes x) = 0) \implies \phi = 0\]
\end{definition}
We will write $g(\phi \otimes x) = \phi(x)$ for $x \in X$ and $\phi$ in $X'$. One easily defines the direct sum of two right functional $R$-modules $\mathcal{X} = (X,X',g)$ and $\mathcal{Y} = (Y,Y',h)$ as $\mathcal{X}\oplus \mathcal{Y} = (X\oplus Y, X'\oplus Y',g +h)$. The following proposition is easy to verify.

\begin{proposition}
    For each right functional module $\mathcal{X} = (X,X',g)$ over $R$, the set of adjointable endomorphisms  of $\mathcal{X}$ forms a ring.
    If $g$ is non-degenerate then the adjoint is unique and the adjoints satisfy $(f_{1}f_{2})^{*} = f_{2}^{*}f_{1}^{*}$.
\end{proposition}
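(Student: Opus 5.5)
We will verify the ring axioms directly and then prove the two statements about adjoints under non-degeneracy. Denote by $\mathcal{L}(\mathcal{X})$ the set of $f\in \text{Hom}_R(X,X)$ that admit at least one adjoint $f^*\in\text{Hom}_R(X',X')$. We show that $\mathcal{L}(\mathcal{X})$ is a subring of the endomorphism ring $\text{Hom}_R(X,X)$. First, $0$ and $\mathrm{id}_X$ lie in $\mathcal{L}(\mathcal{X})$, with adjoints $0$ and $\mathrm{id}_{X'}$.

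Next take $f_1,f_2\in\mathcal{L}(\mathcal{X})$ with chosen adjoints $f_1^*,f_2^*$. Since $g$ is additive in each variable,
\[g(\phi\otimes (f_1-f_2)(x)) = g(\phi\otimes f_1(x)) - g(\phi\otimes f_2(x)) = g((f_1^*-f_2^*)(\phi)\otimes x).\]
For composition,
\[g(\phi\otimes f_1f_2(x)) = g(f_1^*(\phi)\otimes f_2(x)) = g(f_2^*f_1^*(\phi)\otimes x).\]
The maps $f_1^*-f_2^*$ and $f_2^*f_1^*$ are left $R$-linear, being a difference and a composite of such maps. So $f_1-f_2$ and $f_1f_2$ are adjointable, and $\mathcal{L}(\mathcal{X})$ is closed under subtraction and composition. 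The ring axioms are inherited from $\text{Hom}_R(X,X)$.

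Now assume $g$ is non-degenerate, and suppose $f^*$ and $f^\sharp$ are both adjoints of $f$. For every $\phi\in X'$ and $x\in X$ we have
\[g((f^*(\phi)-f^\sharp(\phi))\otimes x) = g(\phi\otimes f(x)) - g(\phi\otimes f(x)) = 0.\]
By the second clause of non-degeneracy this forces $f^*(\phi)=f^\sharp(\phi)$, so the adjoint is unique.

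The composition computation above shows that $f_2^*f_1^*$ is an adjoint of $f_1f_2$. Uniqueness then gives $(f_1f_2)^*=f_2^*f_1^*$. By the same argument, $(f_1-f_2)^*=f_1^*-f_2^*$ and $\mathrm{id}_X^*=\mathrm{id}_{X'}$, so $f\mapsto f^*$ is an anti-homomorphism into $\text{End}_R(X')$.

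Every step is routine. The only point needing care is to use the correct half of non-degeneracy: the half quantifying over $x\in X$ is the one that determines elements of $X'$. Only that half is needed here.
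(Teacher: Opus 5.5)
Your proof is correct and complete. The paper gives no proof, only the remark that the proposition is easy to verify, and your argument is exactly the routine check intended: closure under differences and composites via the explicit adjoints $f_1^*-f_2^*$ and $f_2^*f_1^*$, then uniqueness from the half of non-degeneracy that quantifies over $x\in X$, which yields $(f_1f_2)^*=f_2^*f_1^*$.
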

We denote by $\mathcal{L}_{R}(\mathcal{X})$ the ring of adjointable right $R$-module endomorphisms on the functional module $\mathcal{X}$, with pointwise sum and composition as operations. We will always be working with functional modules with non-degenerate scalar product. Hence for two functional $R$-modules $\mathcal{X}$ and $\mathcal{Y}$ we will often write $f : \mathcal{X} \rightarrow \mathcal{Y}$ for an $R$-module map $f : X \rightarrow Y$ admitting an adjoint.
\begin{definition}
    Let $\mathcal{X} = (X,X',g)$ be a right functional module over $R$. We define the abelian group of compact operators of $\mathcal{X}$ to be $\mathcal{K}_{R}(\mathcal{X}) = X\otimes_{R} X'$.
    It is a ring with the product defined by $(x_{1}\otimes \phi_{1})(x_{2}\otimes \phi_{2}) = x_{1}\otimes (\phi_{1}(x_{2})\cdot\phi_{2})$.
\end{definition}
$\mathcal{K}_{R}(\mathcal{X})$ acts on the left on $X$ by $(x\otimes \phi)\cdot y = x\cdot \phi(y)$ but also on $X'$ on the right by $\psi \cdot(x\otimes \phi) = \psi(x) \cdot \phi$. This gives a ring homomorphism from $\mathcal{K}_{R}(\mathcal{X})$ into the adjointable operators of $\mathcal{X}$.
\begin{definition}{\cite{CO11}}
    A right functional $R$-module $\mathcal{X} = (X,X',g)$ is said to satisfy condition (FS) if for all $x_1,\ldots x_n \in X$ and $\phi_1, \ldots \phi_n \in X'$ there exist $\Theta_1, \Theta_2 \in \mathcal{K}_R(\mathcal{X})$ such that $\Theta_1 \cdot x_i = x_i$ and $\phi_i \cdot \Theta_2 = \phi_i$ for all $1 \leq i \leq n$.
\end{definition}
Recall that a ring $R$ is said to have local units if for any $r_1, \ldots, r_n \in R$ there exists an idempotent $e\in R$ such that for all $1\leq i \leq n$ we have $er_i = r_i e = r_i$.
Recall that a right $R$-module $M$ is called non-degenerate if $M\cdot R = M$.  If $R$ has local units then non-degeneracy is equivalent to asking that the right action map $M \otimes_R R \rightarrow M$ is bijective.
\begin{proposition}{\cite{CO11}} Let $\mathcal{X}$ be a right functional module over $R$ satisfying condition (FS). \begin{enumerate}[label=(\roman*)]
    \item The underlying scalar product of $\mathcal{X}$ is non-degenerate.
    \item The underlying $R$-module structures of $\mathcal{X}$ are non-degenerate.
    \item $\mathcal{K}_{R}(\mathcal{X})$ is embedded in $ \mathcal{L}_{R}(\mathcal{X})$ as a two-sided ideal by the ring homomorphism
$j : x\otimes \phi \mapsto \theta_{x,\phi}$ defined by $\theta_{x,\phi}(y) = x\cdot \phi(y)$.
\end{enumerate}
\end{proposition}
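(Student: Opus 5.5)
The three claims follow directly from condition (FS), used each time to produce a compact operator that acts as a unit on finitely many chosen elements. I will prove (i), then (ii), then (iii), in that order, since (iii) uses the non-degeneracy from (i).

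For (i), suppose $x \in X$ satisfies $\phi(x) = 0$ for all $\phi \in X'$. By (FS) there is $\Theta_1 = \sum_k x_k \otimes \phi_k \in \mathcal{K}_R(\mathcal{X})$ with $\Theta_1 \cdot x = x$. Expanding the action gives
\[x = \Theta_1\cdot x = \sum_k x_k\cdot\phi_k(x) = 0.\]
Symmetrically, if $\phi(x)=0$ for all $x\in X$, choose $\Theta_2 = \sum_k y_k\otimes \psi_k$ with $\phi\cdot\Theta_2 = \phi$. Then
\[\phi = \sum_k \phi(y_k)\cdot\psi_k = 0.\]

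For (ii), the same computation shows that every $x \in X$ can be written as $\sum_k x_k\cdot \phi_k(x)$, which lies in $X\cdot R$. Likewise every $\phi\in X'$ equals $\sum_k \phi(y_k)\cdot \psi_k$, which lies in $R\cdot X'$. Hence $X\cdot R = X$ and $R\cdot X' = X'$.

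For (iii), the argument has three steps.

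\emph{Well-definedness and adjointability.} The map $x\otimes\phi\mapsto \theta_{x,\phi}$ is well defined on $X\otimes_R X'$ because $\theta_{xr,\phi} = \theta_{x,r\phi}$, using that $g$ is a bimodule map. Each $\theta_{x,\phi}$ is right $R$-linear. It is adjointable with adjoint $\psi\mapsto \psi(x)\cdot\phi$, because
\[\psi(\theta_{x,\phi}(y)) = \psi(x)\phi(y) = (\psi(x)\phi)(y).\]
It is multiplicative by the definition of the product on $\mathcal{K}_R(\mathcal{X})$.

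\emph{Ideal property.} For $T\in\mathcal{L}_R(\mathcal{X})$ one checks $T\theta_{x,\phi} = \theta_{Tx,\phi}$ and $\theta_{x,\phi}T = \theta_{x,T^*\phi}$. The second identity uses the adjoint relation $\phi(Ty) = (T^*\phi)(y)$. So the image is a two-sided ideal.

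\emph{Injectivity.} This is the main obstacle. Let $\Theta = \sum_i x_i\otimes\phi_i$ with $\sum_i x_i\cdot\phi_i(y) = 0$ for all $y$. By (FS) pick $\Theta_2 = \sum_k y_k\otimes\psi_k$ with $\phi_i\cdot\Theta_2 = \phi_i$ for all $i$. The key is that the right action of $\mathcal{K}_R(\mathcal{X})$ on $X'$ is compatible with the tensor product, so we can compute inside $X\otimes_R X'$ by moving scalars across $\otimes_R$:
\[\Theta = \sum_i x_i\otimes \phi_i\cdot\Theta_2 = \sum_{i,k} x_i\otimes \phi_i(y_k)\psi_k = \sum_k\Big(\sum_i x_i\,\phi_i(y_k)\Big)\otimes\psi_k.\]
Each inner sum equals $\theta_\Theta(y_k)$, which is $0$ by assumption. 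Hence $\Theta = 0$, so the map is injective.
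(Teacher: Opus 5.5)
Your proof is correct and follows the paper's argument: the same (FS) computation $x=\sum_k x_k\cdot\phi_k(x)$ for (i), the identities $T\theta_{x,\phi}=\theta_{Tx,\phi}$ and $\theta_{x,\phi}T=\theta_{x,T^*\phi}$ for the ideal property, and the same trick $\Theta=\sum_k(\Theta\cdot y_k)\otimes\psi_k$ for injectivity. You also spell out (ii) and the well-definedness, adjointability and multiplicativity of $j$, which the paper leaves implicit; as a minor point, your argument for (iii) uses only (FS) and not (i), contrary to your opening remark.
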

\begin{proof}
Let $x\in X$ be such that for all $\phi \in X'$, $\phi(x) = 0$. There exists a $\Theta \in \mathcal{K}_{R}(\mathcal{X})$ such that $\Theta \cdot x = x$. We can write $\Theta = \sum_{i} x_{i}\otimes \phi_{i}$ thus $x = \sum_{i} x_{i} \cdot \phi_{i}(x) = 0$. Similarly if $\phi \in X'$ is such that $ \forall x \in X, \phi(x) = 0$ then $\phi = 0$.

The formulas $f \circ \theta_{x, \phi} = \theta_{f(x), \phi}$ and $\theta_{x, \phi} \circ f = \theta_{x, f^{*}(\phi)}$ ensure that the image of $ \mathcal{K}_{R}(\mathcal{X})$ is a two-sided ideal of $\mathcal{L}_{R}(\mathcal{X})$.
Let $k\in \mathcal{K}_{R}
(\mathcal{X})$ be such that $j(k) = 0$, i.e. $k\cdot x = 0$ for all $x\in X$. Write $k = \sum_{i}x_i\otimes\phi_i$. Let $e \in \mathcal{K}_R(\mathcal{X})$ be such that $\phi_i \cdot e = \phi_i$ for all $1 \leq i \leq n$. We have $ke = \sum_{i} x_i\otimes(\phi_i\cdot e) = k$. Write now $e = \sum_{j}y_j\otimes\psi_j$. Finally, $k = ke = \sum_{j} (k\cdot y_j)\otimes\psi_j = 0$.
\end{proof}

In the rest of this article we will always be working with functional modules satisfying condition (FS). Hence we will identify the element $x\otimes \phi \in X\otimes_R X'$ with $\theta_{x,\phi} \in \mathcal{L}_{R}(\mathcal{X})$.

\begin{example}
 For any set $I$ we will denote by $R^{(I)}$ the $R$-bimodule of finitely supported sequences $(r_i)_{i \in I}$ of elements of $R$. We will also denote by $R^{(I)}$ the functional module $(R^{(I)},R^{(I)},\langle,\rangle_R)$ equipped with the scalar product given by $\langle(\a_i),(\b_i)\rangle = \sum_i\a_i\b_i$. We will make use of the identification $\mathcal{K}_R(R^{(I)}) \cong M_I(R)$ with the ring of matrices with values in $R$ having only a finite number of non-zero values. When $R$ has local units this functional module satisfies condition (FS).
\end{example}
\begin{definition}{\cite{Bur25}}
    Let $\mathcal{X} = (X,X',g)$ and $\mathcal{Y} = (Y,Y',h)$ be two right functional $R$-modules. A functional homomorphism is the data of two $R$-module maps $U : X \rightarrow Y$ and $V : X' \rightarrow Y'$ such that \[\forall \phi \in X', x\in X, V(\phi)(U(x)) = \phi(x)\]
\end{definition}
If the scalar product of $\mathcal{Y}$ is non-degenerate then the maps $U$ and $V$ are injective. (If $U(x) = 0$ then $\phi(x) = 0$ for all $\phi \in X'$).
\begin{example}
    Assume that we have three right functional $R$-modules such that $\mathcal{Y} = \mathcal{X} \oplus \mathcal{P}$. In this case we let $U : X \rightarrow Y$ and $V : X' \rightarrow Y'$ be the inclusions associated to the splittings $Y = X \oplus P$ and $Y' = X' \oplus P'$. In other words if $\iota :\mathcal{X} \rightarrow \mathcal{Y}$ and $\pi : \mathcal{Y} \rightarrow \mathcal{X}$ are the obvious inclusion and projection, we let $U = \iota$ and $V = \pi^*$. We have $V(\phi)(U(x)) = (\pi^*(\phi))(\iota(x)) = \phi(\pi(\iota(x))) = \phi(x)$. Let $\phi_1, \ldots \phi_n \in X'$ and $y \in Y$. $V(\phi_i)(y) = (\pi^*(\phi_i))(y) = \phi_i(\pi(y))$. Hence $(\iota,\pi^*)$ is a functional homomorphism.
\end{example}
\begin{example}
   Let $M$ be a finitely generated right $R$-module. Then $M$ is projective if and only if it is a direct summand of a right $R$-module of the form $(eR)^n$ for an idempotent $e\in R$ (see \cite{Abr83}). There is a right functional $R$-module $\mathcal{M} = (M,M^*,g)$, where $M^{*}= \text{Hom}_{R}(M,R)$ and $g$ is the evaluation map. We have $((eR)^n)^*\cong (Re)^n$. By a slight abuse of notation we denote by $(eR)^n$ the right functional $R$-module $((eR)^n,(Re)^n,\langle,\rangle)$. There exists a right functional $R$-module $\mathcal{N}$ such that 
   $\mathcal{M} \oplus \mathcal{N} \cong (eR)^n$. Moreover there is an obvious functional homomorphism $(eR)^n \rightarrow R^n$ given by the inclusions. Hence there is a functional homomorphism $\mathcal{M} \rightarrow R^n$.
\end{example}
    Let $(U,V) :\mathcal{X} = (X,X',g) \rightarrow \mathcal{Y} = (Y,Y',h)$ be a functional homomorphism between two right functional $R$-modules. The formula of Definition 2.7 makes the map $ \iota_{\mathcal{X},\mathcal{Y}} = U \otimes V : \mathcal{K}_R(\mathcal{X}) \rightarrow \mathcal{K}_R(\mathcal{Y})$ a ring homomorphism. Moreover one can check that if $\mathcal{X}$ satisfies condition (FS) then $\iota_{\mathcal{X},\mathcal{Y}}$ is injective.
Let $\mathbf{Rings}$ be the category of associative rings with ring homomorphisms. \begin{definition}
    Let $E : \mathbf{Rings} \rightarrow \mathbf{Ab}$ be a functor. $E$ is called $M$-stable if for any ring $S$, any set $I$ and any distinguished element $i \in I$ the diagonal embedding $\sigma_i : S \rightarrow M_I(S)$ at the $i$-th coordinate is sent to an isomorphism.
\end{definition}
According to \cite[Proposition~3.16]{CMR07} any such functor is invariant under
inner isomorphisms, and thus $E(\sigma_i)$ does not depend on the choice of $i$.

Burgstaller \cite{Bur25} recently extended this isomorphism to more general corner
embeddings by using functional homomorphisms. For the reader’s convenience, we recall his result together with its
proof.

\begin{proposition}
    Let $E : \mathbf{Rings} \rightarrow \mathbf{Ab}$ be an $M$-stable functor. Let $R$ be a ring. Let $\mathcal{X}$ be a right functional $R$-module admitting a functional homomorphism to a functional module of the form $R^{(I)}$, $I$ being a set. The map induced by the upper left corner embedding $E(\iota_{R,R\oplus\mathcal{X}}) : E(R) \rightarrow E(\mathcal{K}_R(R \oplus \mathcal{X}))$ is an isomorphism. 
\end{proposition}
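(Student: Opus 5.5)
We use a standard two-out-of-three argument for corner embeddings. Let $(U,V) : \mathcal{X} \rightarrow R^{(I)}$ be the given functional homomorphism, and put $\mathcal{Y} = R \oplus \mathcal{X}$. The identity on $R$ together with $(U,V)$ gives a functional homomorphism $(\mathrm{id}\oplus U, \mathrm{id}\oplus V) : \mathcal{Y} \rightarrow R \oplus R^{(I)} \cong R^{(I')}$, where $I' = I \sqcup \{i_0\}$. By the remark after Example 2.10 it induces a ring homomorphism $\iota_{\mathcal{Y},R^{(I')}} : \mathcal{K}_R(\mathcal{Y}) \rightarrow \mathcal{K}_R(R^{(I')}) \cong M_{I'}(R)$. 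The composite
\[ R \xrightarrow{\iota_{R,\mathcal{Y}}} \mathcal{K}_R(\mathcal{Y}) \xrightarrow{\iota_{\mathcal{Y},R^{(I')}}} M_{I'}(R) \]
is exactly the corner embedding $\sigma_{i_0}$, so $E$ sends it to an isomorphism. Hence $E(\iota_{R,\mathcal{Y}})$ is split injective, and it remains to show that $E(\iota_{\mathcal{Y},R^{(I')}})$ is injective, or equivalently that $E(\iota_{R,\mathcal{Y}})$ is also surjective.

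For this second step we apply the same trick in the other order. Consider the ring $\mathcal{K}_R(\mathcal{Y} \oplus R^{(I')})$, which contains both $\mathcal{K}_R(\mathcal{Y})$ and $M_{I'}(R)$ as corners. We want the composite
\[ \mathcal{K}_R(\mathcal{Y}) \xrightarrow{\iota} M_{I'}(R) \xrightarrow{\iota_{R,\mathcal{Y}}\text{-type map}} M_{I'}(\mathcal{K}_R(\mathcal{Y})) \]
to be homotopic, or better conjugate by an inner (multiplier) isomorphism in the sense of \cite[Proposition~3.16]{CMR07}, to the diagonal corner embedding $\sigma_{i_0} : \mathcal{K}_R(\mathcal{Y}) \rightarrow M_{I'}(\mathcal{K}_R(\mathcal{Y}))$. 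Concretely, $M_{I'}(\mathcal{K}_R(\mathcal{Y})) \cong \mathcal{K}_R(\mathcal{Y}^{(I')})$. There are two embeddings of $\mathcal{K}_R(\mathcal{Y})$ into this ring: the diagonal corner embedding, and the map $\theta_{y,\phi} \mapsto \theta_{\tilde U(y),\tilde V(\phi)}$, where $\tilde U : Y \rightarrow R^{(I')} \otimes Y$ sends $y$ to the vector $(U(y)_i\, e_{i_0})$, built using the first copy $R \subseteq Y$. The matrices with entries given by the coordinates of $U$ and $V$, namely $W = \sum_i e_{i,i_0}\otimes U_i$ and its companion built from $V$, satisfy $W^*W$ acting as the identity on the image. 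This is precisely the functional homomorphism identity $V(\phi)(U(x)) = \phi(x)$. So these are the partial isometries implementing the conjugation.

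The standard Cuntz rotation trick then applies. Two homomorphisms $\alpha, \beta : S \rightarrow T$ with $\beta = w\alpha v$, where $vw$ acts as a unit on the image of $\alpha$ and $v,w$ are multipliers of $T$ (here adjointable operators on $\mathcal{Y}^{(I')} \oplus \mathcal{Y}^{(I')}$), induce the same map after stabilization by $M_2$. By $M$-stability they therefore induce the same map under $E$. Applying this identifies $E$ of the composite with $E(\sigma_{i_0})$, which is an isomorphism. Hence $E(\iota_{\mathcal{Y},R^{(I')}})$ is split injective as well as split surjective, so it is an isomorphism, and therefore $E(\iota_{R,\mathcal{Y}})$ is an isomorphism.

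The main obstacle is this second step. We must write down the multipliers $v,w$ explicitly from $U$ and $V$ and check two things. First, they genuinely define adjointable operators, which uses the relation $V(\phi)(U(x))=\phi(x)$ and non-degeneracy. Second, the relevant products $vw$ restrict to the identity on the image. Without condition (FS) or local units this check is delicate. I would first attempt to phrase the argument purely in terms of the formulas $f\circ\theta_{x,\phi} = \theta_{f(x),\phi}$ and $\theta_{x,\phi}\circ f = \theta_{x,f^*(\phi)}$, so that no units are needed.
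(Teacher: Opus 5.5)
Your first step is the same as the paper's: $\rho\circ e$ is the corner embedding at the new coordinate. Your reduction of the second step to showing $E(M_{I'}(e)\circ\rho)=E(\sigma_{i_0})$ is also exactly what the paper's proof establishes. The gap is in how you propose to prove that equality. You assert that $w=\tilde U\circ\mathrm{pr}_{i_0}$ and its companion built from $V$ are adjointable on $\mathcal{Y}^{(I')}$ ``using $V(\phi)(U(x))=\phi(x)$ and non-degeneracy''. That identity only shows that $\theta_{y,\phi}\mapsto\theta_{\tilde U(y),\tilde V(\phi)}$ is multiplicative; it says nothing about adjoints. Adjointability of $w$ requires that, for every $\beta\in R^{(I)}$, the functional $x\mapsto\langle\beta,U(x)\rangle$ be given by an element of $X'$, and a functional homomorphism does not provide this.

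Here is a concrete counterexample. Take $R=k$ a field, $\mathcal{X}=(k^{(\mathbb{N})},k^{(\mathbb{N})},\langle\,,\rangle)$, $I=\mathbb{N}\sqcup\{*\}$, $U(x)=(x,\sum_n x_n)$ and $V(\phi)=(\phi,0)$. This is a functional homomorphism, but the coordinate $*$ produces the functional $x\mapsto\sum_n x_n\notin X'$. No other choice of $w,v$ helps either. If $w\,\sigma_{i_0}(k)\,v=M_{I'}(e)\rho(k)$ for all $k$, then comparing the rank-one operators $\theta_{w\iota_{i_0}(y),\,v^*\iota_{i_0}(\phi)}=\theta_{\tilde U(y),\tilde V(\phi)}$ forces $w\circ\iota_{i_0}=\mu\,\tilde U$ for a nonzero scalar $\mu$. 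Hence in this example no adjointable operator on $\mathcal{Y}^{(I')}$ (or on $\mathcal{Y}^{(I')}\oplus\mathcal{Y}^{(I')}$) implements the conjugation. Your fallback via $\theta_{x,\phi}\circ f=\theta_{x,f^*(\phi)}$ still needs $f^*$, so it does not close the gap.

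The missing idea is to perform the rotation on functional homomorphisms rather than on operators; this needs no adjoints. Let $f,g:\mathcal{A}\to\mathcal{B}$ be functional homomorphisms. Then $f\oplus g:\mathcal{A}\oplus\mathcal{A}\to\mathcal{B}\oplus\mathcal{B}$ is again one. The induced map $\Phi:M_2(\mathcal{K}_R(\mathcal{A}))\to M_2(\mathcal{K}_R(\mathcal{B}))$ satisfies $\Phi\circ j_{11}=j_{11}\circ\iota_f$ and $\Phi\circ j_{22}=j_{22}\circ\iota_g$ for the two corner embeddings. By $M$-stability and invariance under conjugation by the flip \cite[Proposition~3.16]{CMR07}, $E(j_{11})=E(j_{22})$ and both are isomorphisms. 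Hence
\[
E(j_{22})E(\iota_g)=E(\Phi)E(j_{22})=E(\Phi)E(j_{11})=E(j_{11})E(\iota_f)=E(j_{22})E(\iota_f),
\]
so $E(\iota_f)=E(\iota_g)$. Apply this with $f$ the inclusion of $\mathcal{Y}$ at coordinate $i_0$, so that $\iota_f=\sigma_{i_0}$. Take $g=(\tilde U,\tilde V)$, the composite of $(\mathrm{id}\oplus U,\mathrm{id}\oplus V)$ with the coordinatewise inclusion $R^{(I')}\to\mathcal{Y}^{(I')}$, so that $\iota_g=M_{I'}(e)\circ\rho$. This is precisely the mechanism of the paper's proof. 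Its map $\phi$ on $M_2(D_{\mathcal{X}})$ is induced by the functional homomorphism $\alpha\oplus\beta$, and the argument uses only $E(j_1)=E(j_1')$ and $E(j_2)=E(j_2')$, never adjointability of $U$.
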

\begin{proof}
    Write $D_\mathcal{X} = \mathcal{K}_{R}(R\oplus \mathcal{X})$. There is a functional homomorphism $(U,V) :\mathcal{X} \rightarrow R^{(I)}$. Let $a$ be a formal symbol not in $I$, write simply $I\cup a$ for $I\cup \{a\}$. $(U,V)$ gives another functional homomorphism $(\mathrm{id}_R \oplus U, \mathrm{id}_R\oplus V) :R \oplus \mathcal{X} \hookrightarrow R^{(I\cup a)}$ which induces a ring homomorphism $\rho = \iota_{\mathcal{X},R^{(I\cup a)}} : D_\mathcal{X} \rightarrow M_{I\cup\{a\}}(R)$. Denote by $e : R \hookrightarrow D_{\mathcal{X}}$ the upper left corner embedding induced by the split embedding of functional modules $R \hookrightarrow R \oplus \mathcal{X}$.
    \newline
    Consider the following diagram
\begin{center}
\begin{tikzcd}
                                                                                &                                                                         &  &                                                                    &  & M_{2}(D_\mathcal{X}) \arrow[d, "\phi"]    \\
D_\mathcal{X} \arrow[r, "\rho"'] \arrow[rrrrru, "j_1"] \arrow[rrrrru, "\cong"'] & M_{I\cup{a}}(R) \arrow[rr, "M_{I\cup a}(e)"']                           &  & M_{I\cup{a}}(D_\mathcal{X}) \arrow[rr, "j_2"'] \arrow[rr, "\cong"] &  & M_{I\cup{a}\cup I'\cup a'}(D_\mathcal{X}) \\
                                                                                & R \arrow[rr, "e"] \arrow[u, "i_a"] \arrow[lu, "e"'] \arrow[u, "\cong"'] &  & D_\mathcal{X} \arrow[u, "i_a'"'] \arrow[u, "\cong"]                &  &                                          
\end{tikzcd}
\end{center}
Here $e = \iota_{R, R \oplus \mathcal{X}}$ denotes the corner embedding $R \rightarrow D_\mathcal{X}$. $i_a$ and $i_a'$ denote the embeddings into the corner of the coordinate labelled $a$ in the corresponding matrix ring. $I'\cup a'$ is just a copy of $I\cup a$ and $j_2$ embeds $M_{I\cup{a}}(D_\mathcal{X})$ via identification with the coordinates of $I'\cup a'$. $j_1$ is the lower right corner embedding. We have the following identifications $M_{2}(D_\mathcal{X}) \cong \mathcal{K}_R((R \oplus\mathcal{X})^2))$ and $M_{I\cup{a}\cup I'\cup a'}(D_\mathcal{X}) \cong \mathcal{K}_R((R \oplus\mathcal{X})^{I\cup{a}\cup I'\cup a'})$. The map $\phi : M_2(D_\mathcal{X}) \rightarrow M_{I\cup{a}\cup I'\cup a'}(D_\mathcal{X})$ in the diagram above is associated to the following functional homomorphisms
\begin{center}

\begin{tikzcd}
(R\oplus \mathcal{X})^2 \arrow[rr, "\mathrm{id}_{R\oplus \mathcal{X}}\oplus \mathrm{id}_{R}\oplus f", hook] &  & R\oplus \mathcal{X} \oplus R^{(I\cup a)} \arrow[rr, "\alpha\oplus\beta", hook] &  & (R\oplus \mathcal{X})^{(I\cup a)} \oplus (R\oplus \mathcal{X})^{(I'\cup a')}
\end{tikzcd}
\end{center}
Here $f = (U,V) :\mathcal{X} \rightarrow R^{(I)}$ and
$\alpha : R\oplus \mathcal{X} \hookrightarrow(R\oplus \mathcal{X})^{(I\cup a)}$ is the embedding at the $a$-th coordinate. $\beta : R^{(I \cup a)} \hookrightarrow(R\oplus \mathcal{X})^{(I'\cup a')}$ is the embedding $R \hookrightarrow R\oplus \mathcal{X}$ at each coordinate. In the above diagram the isomorphisms are to be understood at the level of the corresponding abelian groups: by $M$-stability the maps $E(i_a)$, $E(i_a')$, $E(j_1)$, $E(j_2)$ are bijective.

A direct computation proves that this diagram commutes ($\beta$ induces the ring homomorphism $M_{I\cup a}(e)$). It remains to show that $E(e)$ is an isomorphism with inverse $E(i_a)^{-1}E(\rho)$. We have $E(i_a)^{-1}E(\rho)E(e) = \mathrm{id}_{E(R)}$ because $i_a = \rho e$.
\begin{multline*}
E(e)E(i_a)^{-1}E(\rho)
= E(i_a')^{-1}E(M_{I\cup \{a\}}(e))E(\rho) \\
= E(i_a')^{-1}E(j_2)^{-1}E(j_2)E(M_{I\cup \{a\}}(e))E(\rho) \\
= E(i_a')^{-1}E(j_2)^{-1}E(\phi)E(j_1)
\end{multline*}
The trick is now to write $E(j_2) = E(j_2')$ and $E(j_1) = E(j_1')$ where $j_1' : D_\mathcal{X} \hookrightarrow M_2(D_\mathcal{X})$ is the upper left corner embedding and $j_2' : M_{I\cup a}(D_\mathcal{X}) \hookrightarrow M_{I\cup a \cup I' \cup a}(D_\mathcal{X})$ is the embedding obtained by identifying the coordinates $I\cup a$ in $I\cup a \cup I' \cup a'$. As $\phi j_1' = j_2'i_a'$ we finally get $E(i_a')^{-1}E(j_2)^{-1} E(\phi) E(j_1) = E(i_a')^{-1}E(j_2')^{-1} E(\phi) E(j_1') = \mathrm{id}_{E(D_\mathcal{X})}$.
\end{proof}
\begin{definition}
    Let $R$ and $S$ be two rings.
    An $R$-$S$ correspondence is the data of a quadruple $(\mathcal{X},\Delta,\mathcal{U},I)$ where  \begin{enumerate}[label=(\roman*)]
        \item $\mathcal{X}$ is a right functional $S$-module satisfying condition (FS)
        \item $\Delta : R \rightarrow \mathcal{L}_S(\mathcal{X})$ is a ring homomorphism (called the left action) such that the induced left $R$-module structure on $X$ and right $R$-module structure on $X'$ are non-degenerate: $\Delta(R)\cdot X = X$ and $X' \cdot \Delta(R) = X'$
        \item $I$ is a set and $ \mathcal{U} : \mathcal{X} \rightarrow S^{(I)}$ is a functional homomorphism
    \end{enumerate}
    An $R$-correspondence is an $R$-$R$ correspondence.
\end{definition}
The notion of correspondence serves here as the minimal algebraic analogue of the classical notion of a $C^*$-correspondence.

We will often simply write $\mathcal{X}$ for the correspondence $(\mathcal{X},\Delta,\mathcal{U},I)$. Choose an $R$-$S$ correspondence $\mathcal{X} = (X, X',g)$ with functional homomorphism $ \mathcal{U}_1 : \mathcal{X} \rightarrow S^{(I)}$ and an $S$-$T$ correspondence $\mathcal{Y}= (Y, Y',h)$ with functional homomorphism
$\mathcal{U}_2 : \mathcal{Y} \rightarrow T^{(J)}$. One defines the tensor product $\mathcal{X}\otimes_S\mathcal{Y}$ as the $R$-$T$ correspondence given by $\mathcal{X}\otimes_S\mathcal{Y} = (X\otimes_S Y,Y'\otimes_S X',k)$ with $k$ defined by the formula $(\psi \otimes \phi)(x\otimes y) = \psi(\phi(x)\cdot y)$ for all $x\in X, y\in Y, \phi \in X', \psi \in Y'$. The functional homomorphism underlying $\mathcal{X}\otimes_S\mathcal{Y}$ is given by \[\begin{tikzcd}
X\otimes_S Y \arrow[r, "U_1 \otimes_S \mathrm{id}_Y"]     & S^{(I)}\otimes_S Y \arrow[r]  & Y^{(I)} \arrow[r, "\oplus_I U_2"]  & R^{(I\times J)} \\
Y'\otimes_S X' \arrow[r, "\mathrm{id}_{Y'}\otimes_S V_1"] & Y'\otimes_S S^{(I)} \arrow[r] & Y'^{(I)} \arrow[r, "\oplus_I V_2"] & R^{(I\times J)}
\end{tikzcd} \]

Let $\mathcal{X}$ be an $R$-$S$ correspondence. Denote by $\Delta : R \rightarrow \mathcal{L}_S(\mathcal{X})$ the left action. Assume that $R$ is such that $\forall r\in R, \Delta(r) \in \mathcal{K}_S(\mathcal{X})$. Let $E : \mathbf{Rings} \rightarrow \mathbf{Ab}$ be an $M$-stable functor.
As $E(\iota_{R,R\oplus\mathcal{X}})$ is an isomorphism we see that $\mathcal{X}$ induces a map $E(\mathcal{X}) : E(R) \rightarrow E(S)$ by the following diagram \[\begin{tikzcd}
R \arrow[r, "\Delta"] & \mathcal{K}_S(\mathcal{X}) \arrow[r, "{\iota_{\mathcal{X},S\oplus\mathcal{X}}}"] & \mathcal{K}_S(S\oplus\mathcal{X}) & S \arrow[l, "{\iota_{S,S\oplus\mathcal{X}}}"']
\end{tikzcd}\]

For the applications, we will need another way of computing $E(\mathcal{X})$. The functional homomorphism of $\mathcal{X}$ yields the existence of a ring homomorphism $\iota_{\mathcal{X}, S^{(I)}} :\mathcal{K}_{S}(\mathcal{X}) \rightarrow M_I(S)$. As $E$ is an $M$-stable functor there is a map $E(\mathcal{K}_{S}(\mathcal{X})) \rightarrow E(S)$ obtained by composing with the inverse of the map induced by any corner embedding $\iota_{S,S^{(I)}} : S \rightarrow M_I(S)$.
\begin{proposition}
    $E(\iota_{S,S\oplus\mathcal{X}})^{-1}E(\iota_{\mathcal{X},S\oplus \mathcal{X}}) = E(\iota_{S,S^{(I)}})^{-1} E(\iota_{\mathcal{X},S^{(I)}})$.
\end{proposition}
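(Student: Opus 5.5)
The plan is to compare both sides inside one large matrix ring via functional homomorphisms. Write $f=\mathcal{U}=(U,V):\mathcal{X}\to S^{(I)}$. As in the proof of Proposition 2.11, adjoin a formal symbol $a\notin I$. The functional homomorphism $\mathrm{id}_S\oplus f : S\oplus\mathcal{X}\to S^{(I\cup a)}$ induces a ring homomorphism $\rho=\iota_{S\oplus\mathcal{X},S^{(I\cup a)}}:\mathcal{K}_S(S\oplus\mathcal{X})\to M_{I\cup a}(S)$. The idea is to use $\rho$ to transport both compositions into $M_{I\cup a}(S)$, where all corner embeddings become inverses of one another under $E$ by $M$-stability.

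The key observation is functoriality of $\iota$. For composable functional homomorphisms, $\iota_{\mathcal{Y},\mathcal{Z}}\circ\iota_{\mathcal{X},\mathcal{Y}}=\iota_{\mathcal{X},\mathcal{Z}}$, whenever the composite functional homomorphism is the chosen one; this is immediate from the formula $U\otimes V$. The steps are as follows.
\begin{enumerate}[label=(\arabic*)]
\item Check that $\rho\circ\iota_{S,S\oplus\mathcal{X}}=i_a$, the corner embedding of $S$ at coordinate $a$.
\item Check that $\rho\circ\iota_{\mathcal{X},S\oplus\mathcal{X}}=j\circ\iota_{\mathcal{X},S^{(I)}}$, where $j:M_I(S)\to M_{I\cup a}(S)$ is the inclusion induced by $S^{(I)}\subset S^{(I\cup a)}$. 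Both sides are $U\otimes V$ followed by the inclusion of coordinates, so this is a direct verification on elementary tensors $x\otimes\phi$.
\end{enumerate}

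By Proposition 2.11, applied with the functional homomorphism $\mathcal{U}$, the map $E(\iota_{S,S\oplus\mathcal{X}})$ is an isomorphism. Combined with step (1), $E(\rho)$ is then an isomorphism, with $E(\iota_{S,S\oplus\mathcal{X}})^{-1}=E(i_a)^{-1}E(\rho)$. Therefore
\[E(\iota_{S,S\oplus\mathcal{X}})^{-1}E(\iota_{\mathcal{X},S\oplus\mathcal{X}})=E(i_a)^{-1}E(\rho)E(\iota_{\mathcal{X},S\oplus\mathcal{X}})=E(i_a)^{-1}E(j)E(\iota_{\mathcal{X},S^{(I)}}).\]
It remains to show that $E(i_a)^{-1}E(j)=E(\iota_{S,S^{(I)}})^{-1}$, where $\iota_{S,S^{(I)}}=i_b$ is a corner embedding at some $b\in I$. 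Since $j\circ i_b$ is the corner embedding of $S$ at coordinate $b$ in $M_{I\cup a}(S)$, the remark after Definition 2.10 (independence of $E(\sigma_i)$ from $i$, via invariance under inner isomorphisms, \cite[Proposition~3.16]{CMR07}) gives $E(j)E(i_b)=E(i_a)$. Hence $E(i_a)^{-1}E(j)=E(i_b)^{-1}$, and we are done.

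The main obstacle is the degenerate case $I=\emptyset$. Then $M_I(S)=0$ and there is no corner embedding $\iota_{S,S^{(I)}}$. In this case $\mathcal{X}=0$, because $U$ is injective, so I would simply exclude it or treat it separately. A second point needing care is the bookkeeping in step (2): it requires that $\iota_{\mathcal{X},S\oplus\mathcal{X}}$ is induced by the split inclusion $(\iota,\pi^*)$ of Example 2.8. I would verify directly that its composite with $\mathrm{id}_S\oplus f$ is the functional homomorphism $\mathcal{X}\to S^{(I)}\subset S^{(I\cup a)}$.
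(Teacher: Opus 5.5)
Your proposal is correct and follows essentially the same route as the paper: you use the same auxiliary map $\rho=\iota_{S\oplus\mathcal{X},S^{(I\cup a)}}$ and the same two triangles, reducing both sides to $E(i_a)^{-1}E(\sigma\circ\iota_{\mathcal{X},S^{(I)}})$. Your explicit appeal to the independence of $E(\sigma_i)$ from the chosen coordinate is genuinely needed, because the paper's second triangle $\sigma\circ\iota_{S,S^{(I)}}=i_a$ only commutes after applying $E$, a point the paper leaves implicit.
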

\begin{proof}
The following diagram \begin{center}
    \begin{tikzcd}
\mathcal{K}_S(\mathcal{X}) \arrow[r, "{\iota_{\mathcal{X},S\oplus\mathcal{X}}}"] \arrow[rd, "h"'] & \mathcal{K}_S (S \oplus\mathcal{X}) \arrow[d, "\rho"] & S \arrow[l, "{\iota_{S,S\oplus\mathcal{X}}}"'] \arrow[ld, "{\iota_{a}}"] \\
                                                                                                  & M_{I\cup a}(S)                  &                                                                                       
\end{tikzcd}
\end{center}
is commutative. $\rho = \iota_{S\oplus \mathcal{X},S^{(I\cup a)}}$ is induced by the splitting $S^{(I \cup a)} = (S\oplus \mathcal{X}) \oplus \mathcal{Y}$. $h = \iota_{\mathcal{X},S^{(I\cup a)}}$ is the composition of $\iota_{\mathcal{X}, S^{(I)}} : \mathcal{K}_{S}(\mathcal{X}) \rightarrow M_I(S)$ with the embedding $\sigma = \iota_{S^{(I)},S^{(I\cup a)} } :M_I(S) \hookrightarrow M_{I\cup a}(S)$. $\iota_a = \iota_{S,S^{(I\cup a)}}$ is the embedding at the coordinate $a$. Its commutativity yields $E(\iota_{S,S\oplus\mathcal{X}})^{-1} E(\iota_{\mathcal{X},S\oplus\mathcal{X}}) = E(i_a)^{-1}E(h)$. The diagram
\begin{center}
    \begin{tikzcd}
\mathcal{K}_S(\mathcal{X}) \arrow[r, "{\iota_{\mathcal{X},S^{\small(I)}}}"] \arrow[rd, "h"'] & M_I(S) \arrow[d, "\sigma"] & S \arrow[l, "{\iota_{S,S^{\small(I)}}}"'] \arrow[ld, "i_a"] \\
                                                                                             & M_{I\cup a}(S)             &                                                            
\end{tikzcd}
\end{center}
is also commutative. Similarly, we get $E(\iota_{S,S^{(I)}})^{-1} E(\iota_{\mathcal{X},S^{(I)}}) = E(i_a)^{-1}E(h)$.
\end{proof}

\subsection{Algebraic Cuntz-Pimsner rings}\label{CP}

In this section $R$ is a ring with local units and $\mathcal{I}$ is a two-sided ideal of $R$ which also admits local units. Let $\mathcal{X} = (X,X',g)$ be an $R$-correspondence such that $\mathcal{I}$ acts on $X$ on the left by compact operators. This means that if $\Delta : R \longrightarrow{} \mathcal{L}_{R}(X)$ is the left action then 
$\Delta(\mathcal{I}) \subset \mathcal{K}_{R}(X)$.

Let
$T(X) = \bigoplus_{n \in \mathbb{N}}{X^{\otimes n}}$
be the $\mathbb{N}$-graded $R$-$R$ bimodule known as the Fock space (where the tensor products are taken over $R$).
\newline
There is an $R$-correspondence
\[
T(\mathcal X)=(T(X),T(X'),\widetilde g)
\]
where $\widetilde g(\phi_{1}\otimes\cdots\otimes\phi_{n},
x_{1}\otimes\cdots\otimes x_{m}) = 0$ if $n\neq m$, and 
\[
\widetilde g(\phi_{1}\otimes\cdots\otimes\phi_{n},
x_{1}\otimes\cdots\otimes x_{n})
= \phi_{1}\bigl(\phi_{2}(\cdots(\phi_{n}(x_{1})x_{2})\cdots)x_{n}\bigr).
\]
for all $\phi_k \in X'$ and $x_i \in X$. For $\phi\in X'$, $x\in X$ and $p \in T(X)$ a pure tensor, define
\begin{align*}
T_x(p) &:= x\otimes p,\\
T_\phi(p) &:= 
\begin{cases}
0 & \text{if }\deg(p)=0\\
\phi(p_1)\cdot p_2\otimes\cdots\otimes p_n
& \text{if }p=p_1\otimes\cdots\otimes p_n,\ n\ge1.
\end{cases}
\end{align*}

Then $T_x,T_\phi\in\mathcal L_R(T(X))$ with adjoints given on all pure tensors $\psi \in T(X')$ by
\begin{align*}
T_x^*(\psi) &:= 
\begin{cases}
0 & \text{if }\deg(\psi)=0,\\
\psi_1\otimes\cdots\otimes\psi_{n-1}\cdot\psi_n(x),
& \text{if }\psi=\psi_1\otimes\cdots\otimes\psi_n
\end{cases}\\
T_\phi^*(\psi) &:= \psi\otimes\phi,
\end{align*}
in $\mathcal L_R(T(X'))$.

\begin{definition}
    The \textbf{Toeplitz ring} 
    $\mathcal{T}_{\mathcal{X}}$ of $\mathcal{X}$ is the subring generated by the $T_{x}$, $T_{\phi}, r\cdot $id$_{T(X)} \in \mathcal{L}_{R}(T(X)), \phi\in X', x \in X, r\in R$.
    \newline
    Let $J_{\mathcal{X},{\mathcal{I}}}$ be the two-sided ideal of $\mathcal{T}_{\mathcal{X}}$ generated by $\mathcal{I}\cdot P_{0}$ where $P_{0}\in \mathcal{L}_{R}(T(X))$ is the map that sends $x$ to 0 if deg$(x) \geq 1$ and is the identity on $R$.
    The \textbf{Cuntz-Pimsner ring} of $\mathcal{X}$ with respect to the ideal $\mathcal{I}$  is the quotient ring $
\mathcal O_{\mathcal X,\mathcal I}
= \mathcal T_{\mathcal X} / J_{\mathcal X,\mathcal I}.$
\end{definition}
 $T(X)$ is an $\mathbb{N}$-graded $R$-bimodule. The rings $\mathcal{T}_\mathcal{X}$ and $\mathcal{O}_{\mathcal{X},\mathcal{I}}$ are $\mathbb{Z}$-graded. Observe that these rings have local units if the base ring $R$ has local units.
\begin{remark}
    Be careful that $P_{0}$ is not in general an element of $\mathcal{T}_{\mathcal{X}}$. The reason why $\mathcal{I}\cdot P_{0}$ is included in $\mathcal{T}_{\mathcal{X}}$ is because $\mathcal{I}$ acts on $X$ by compact operators. Hence for all $i\in \mathcal{I}$ there are $x_i \in X$ and $\phi_i \in X'$ such that

    \[\Delta(i) = \sum_{i}{x_{i}\otimes \phi_{i}}\]
    and
    \[i\cdot P_{0} = i\cdot \mathrm{id}_{T(X)} - \sum_{i}{T_{x_{i}} T_{\phi_{i}}} \in \mathcal{T}_{\mathcal{X}}\]
\end{remark}
\begin{proposition}
$J_{\mathcal{X},{\mathcal{I}}}$ is the ideal of compact operators of the right functional $R$-module \\
$T(\mathcal{X})\cdot \mathcal{I} = (T(X)\cdot \mathcal{I},\mathcal{I}\cdot T(X'),\widetilde g)$.
\end{proposition}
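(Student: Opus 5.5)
We show two inclusions inside $\mathcal{L}_R(T(X))$, identifying $\mathcal{K}_R(T(\mathcal{X})\cdot\mathcal{I})$ with the span of the rank-one operators $\theta_{\xi i,\, j\eta}$, where $\xi\in X^{\otimes n}$, $\eta\in X'^{\otimes m}$ and $i,j\in\mathcal{I}$. We write $T_\xi=T_{x_1}\cdots T_{x_n}$ for $\xi=x_1\otimes\cdots\otimes x_n$, and dually $T_\eta$ for $\eta\in X'^{\otimes m}$. The basic computation is the identity
\[
T_\xi\,(ij\cdot P_0)\,T_\eta=\theta_{\xi i,\, j\eta}.
\]
To check it, evaluate both sides on a pure tensor $p$. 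The factor $P_0$ kills every component except the degree-$0$ part of $T_\eta p$. That part is $\widetilde g(\eta^{\mathrm{op}}\otimes p)$ with the appropriate ordering convention coming from the adjoint formulas, and we choose the ordering of $\eta$ so that this equals $(j\eta)(p)$. Then $ij$ is applied, and finally $T_\xi$ gives $\xi\cdot ij\,\eta(p)=\theta_{\xi i,j\eta}(p)$. Both sides vanish on components of the wrong degree.

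For the inclusion $\mathcal{K}_R(T(\mathcal{X})\mathcal{I})\subseteq J_{\mathcal{X},\mathcal{I}}$, every element of $T(X)\cdot\mathcal{I}$ has the form $\xi i$, using local units in $\mathcal{I}$: write $\xi i=\xi i e$ with $e\in\mathcal{I}$ idempotent, and absorb the scalars into $\xi$ via the right $R$-action. The same holds for $\mathcal{I}\cdot T(X')$. Hence every generator $\theta_{\xi i,\,j\eta}$ can be written as $\theta_{\xi i e,\, e' j\eta}$, and by the identity above this equals $T_{\xi i}\,(ee'\cdot P_0)\,T_{j\eta}$. It therefore lies in the ideal generated by $\mathcal{I}P_0$. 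In degree $0$, where $\xi\in R$, we use $r\cdot\mathrm{id}$ in place of $T_\xi$.

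For the reverse inclusion, we first note that $\mathcal{K}_R(T(\mathcal{X})\mathcal{I})$ contains $\mathcal{I}P_0$. Indeed, for $i\in\mathcal{I}$ pick an idempotent $e\in\mathcal{I}$ with $ei=i$; then $iP_0=\theta_{e,\,i}$ with degree-$0$ tensors. It then suffices to show that $\mathcal{K}_R(T(\mathcal{X})\mathcal{I})$ is a two-sided ideal of $\mathcal{T}_\mathcal{X}$, i.e.\ that it is stable under left and right multiplication by the generators $T_x$, $T_\phi$ and $r\cdot\mathrm{id}$. We use the formulas $f\theta_{a,b}=\theta_{f(a),b}$ and $\theta_{a,b}f=\theta_{a,f^*(b)}$ from the proof of Proposition 2.5. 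For these to apply we need the generators to preserve $T(X)\mathcal{I}$ and, via their adjoints, $\mathcal{I}T(X')$. This holds because the operators $T_x$ and $T_\phi$ and their adjoints are right $R$-linear on $T(X)$ and left $R$-linear on $T(X')$. For example, $T_\phi(\xi i)=\phi(\xi_1)\xi_2\cdots\xi_n\, i$, and $T_x^*(i\eta)=i\,\eta_1\cdots\eta_{n-1}\eta_n(x)$.

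A subtle point is that $\mathcal{K}_R$ of the submodule must be compared with operators on all of $T(X)$. The element $\theta_{a,b}$ with $a\in T(X)\mathcal{I}$ and $b\in\mathcal{I}T(X')$ defines an operator on the whole of $T(X)$. To check that the map from $\mathcal{K}_R(T(\mathcal{X})\mathcal{I})$ to $\mathcal{L}_R(T(X))$ is injective, we follow the argument of Proposition 2.5(iii): local units of $\mathcal{I}$ together with condition (FS) on $X$ give condition (FS) for $T(\mathcal{X})\mathcal{I}$. The main obstacle is this bookkeeping with the degree conventions in $\widetilde g$ together with the injectivity; the ideal-theoretic part is routine.
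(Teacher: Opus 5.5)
Your proof is correct and follows essentially the same route as the paper. For $J_{\mathcal{X},\mathcal{I}}\subseteq\mathcal{K}_R(T(\mathcal{X})\cdot\mathcal{I})$, both arguments use $\mathcal{I}P_0\subseteq\mathcal{K}_R(T(\mathcal{X})\cdot\mathcal{I})$ together with the ideal property of these compacts; for the reverse inclusion, both use the identity $T_{x_1}\cdots T_{x_n}(i\cdot P_0)T_{\phi_1}\cdots T_{\phi_m}=\theta_{p\cdot i,\psi}$, which places each rank-one generator in $J_{\mathcal{X},\mathcal{I}}$. Your extra injectivity remark covers a point the paper leaves implicit, and your unproved claim that (FS) passes to $T(\mathcal{X})\cdot\mathcal{I}$ does hold, because $i\,\eta(c)$ lies in the ideal $\mathcal{I}$ and so absorbs a local unit of $\mathcal{I}$.
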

\begin{proof}
    We can represent elements of $\mathcal{L}_{R}(T(X))$ by (infinite) matrices by giving the action of the operator on each subspace $X^{\otimes n}$. In particular

    \[T_{x} = \begin{pmatrix}
    0 & 0 & 0 & 0 & \ldots\\
x & 0 & 0 & 0 & \ldots\\
0 & x & 0 & 0 & \ldots\\
0 & 0 & x & 0 & \ldots\\
\ldots
\end{pmatrix}, 
    T_{\phi} = \begin{pmatrix}
0 & \phi & 0 & 0 & \ldots\\
0 & 0 & \phi & 0 & \ldots\\
0 & 0 & 0 & \phi & \ldots\\
0 & 0 & 0 & 0 & \ldots\\
\ldots
\end{pmatrix},
    \pi_{0}\cdot{i} = \begin{pmatrix}
i & 0 & 0 & 0 & \ldots\\
0 & 0 & 0 & 0 & \ldots\\
0 & 0 & 0 & 0 & \ldots\\
0 & 0 & 0 & 0 & \ldots\\
\ldots
\end{pmatrix}\]
Here the symbol $x$ (resp. $\phi$) in the above matrix is to be understood as the operator $T_x$ (resp $T_\phi$) restricted to the corresponding subspace $X^{\otimes n}$. Moreover $\mathcal{K}_R(T(\mathcal{X}) \cdot \mathcal{I}) = T(X)\cdot \mathcal{I} \otimes_{R} \mathcal{I}\cdot  T(X') = T(X)\cdot \mathcal{I} \otimes_{R} T(X')$ because $\mathcal{I}$ has local units. Distributing the tensor product we get
\[\mathcal{K}_R(T(\mathcal{X} \cdot \mathcal{I})) = \bigoplus_{(n,m)\in \mathbb{N}^{2}}X^{\otimes n}\otimes \mathcal{I}\otimes (X')^{\otimes m},\] which is an ideal of $\mathcal{L}_{R}(T(X))$. We have $J_{\mathcal{X},{\mathcal{I}}}\subset \mathcal{K}_R(T(\mathcal{X}) \cdot \mathcal{I})$ because $P_{0} \in \mathcal{K}_R(T(\mathcal{X} \cdot \mathcal{I}))$ and any matrix of the form $T_{\phi}$ or $T_{x}$ multiplied by a finite matrix is still a finite matrix. In addition, for any $i\in \mathcal{I}$, $p = x_{1}\otimes \ldots \otimes x_{n}\in X^{\otimes n}$, $\psi = \phi_{1}\otimes \ldots \otimes \phi_{m} \in (X')^{\otimes m}$ :

\[\begin{pmatrix}
0 & \ldots & 0 & 0 & \ldots\\
\ldots & \ldots & 0 & 0 & \ldots\\
0 & ( p\cdot i )\otimes \psi  & 0 & 0 & \ldots\\
0 & 0 & 0 & 0 \\
\ldots
\end{pmatrix} = 
T_{x_{1}}\ldots T_{x_{n}}(i\cdot P_{0})T_{\phi_{1}}\ldots T_{\phi_{m}}\in \mathcal{T}_{\mathcal{X}}. \qedhere \]
\end{proof}
\begin{example} Let $\mathcal{X} = (M,M^*,\text{ev})$ with $M$ an $R$-$R$ bimodule that is finitely generated and projective on the right. We have 
    \[ \mathcal{O}_{\mathcal{X}} \cong \varinjlim_{k\in \mathbb{N}}\bigoplus_{d\in \mathbb{Z}} \text{Hom}_{-,R}(M^{\otimes k},M^{\otimes k+d})\] as rings. We take the convention that $M^{\otimes n}$ is the zero module for negative $n$. The colimit is taken over tensoring by the identity of $M$ and the product is given by the composition. Indeed, because $M_R$ is finitely generated and projective we have an isomorphism (see \cite{AAM87}) Hom$_{-,R}(M^{\otimes k},M^{\otimes l}) \cong M^{\otimes l} \otimes (M^{*})^{\otimes k}$. If we let $O$ be the ring obtained as the above direct sum of colimits then there is a ring homomorphism $\mathcal{T}_{\mathcal{X}} \rightarrow O$ obtained by restriction to the modules $M^{\otimes k}$ for sufficiently large $k$. The kernel of this map is clearly $J_\mathcal{X} = T(M) \otimes T(M^*)$. If $R$ is unital then $\mathcal{O}_\mathcal{X}$ coincides with the strong covariance ring of the bimodule $M$ (see \cite{Mey}).
    \end{example}
\begin{example}
    Let $\mathcal{X} = (R,R,\mu)$, $\mathcal{I} = R$. Let $R$ act on $R$ by multiplication. Then $T(X) = R^{(\mathbb{N})}$. Each $T_{x}$ acts on $T(X)$ by multiplication by $x$ in $R$ and by adding one to the degree, each $T_{\phi}$ multiplies by $\phi$ and subtracts one from the degree. Moreover $J_{\mathcal{X}} = \mathcal{K}_R(T(\mathcal{X})) = M_{\infty} (R)$ and \[
\mathcal{T}_{\mathcal{X}}
\cong {R\langle x,y\rangle}/{(xy-1)},
\qquad
\mathcal{O}_{\mathcal{X}}
\cong R[x,x^{-1}].
\]      
        More generally, if $\alpha$ is a ring automorphism of $R$ and each $r\in R$ acts on $R$ on the left by multiplication by $r$ and on the right by multiplication by $\alpha(r)$ then $\mathcal{O}_{\mathcal{X}} \cong R\rtimes_{\alpha}\mathbb{Z}$ is the crossed product ring of $R$ by $\mathbb{Z}$ with $\a$.
\end{example}
The Toeplitz ring has a universal characterization in terms of representations analogous to the one for $C^{\ast}$-algebras.

\begin{definition}{\cite[Definition 1.2]{CO11}}
    A covariant representation of $(R,\mathcal{X})$ is a quadruple $(S,T,\sigma,D)$ where
    \begin{enumerate}[label=(\roman*)]
        \item $D$ is a ring;
        \item $\sigma: R \to D$ is a ring homomorphism;
        \item $S:X' \rightarrow{} D$ and $T:X \rightarrow{} D$ are $R$-bimodule homomorphisms with respect to the bimodule structure induced by multiplication and by $\sigma$;
        \item $\forall \phi\in X', \forall x \in X, \sigma(g(\phi\otimes x)) = S(\phi)T(x)$.
    \end{enumerate}
\end{definition}
 If $j : R \to \mathcal{T}_{\mathcal{X}}$ is the natural inclusion, $T_{\mathcal{X}},S_{\mathcal{X}} : X,X' \rightarrow \mathcal{T}_{\mathcal{X}}$ map respectively $x$ to $T_{x}$ and $\phi$ to $T_{\phi}$ then $(S,T,j,\mathcal{T}_{\mathcal{X}})$ is a covariant representation. The ring homomorphism $j$ is injective by non-degeneracy. The ring $\mathcal{T}_\mathcal{X}$ is generated by $T_{\mathcal{X}}(X)$, $S_{\mathcal{X}}(X')$ and $j(R)$. The Toeplitz ring is universal in the following sense:
\begin{theorem}
Let $R$ be a ring with local units and let $\mathcal X$ be an $R$-correspondence.
If $(S,T,\sigma,D)$ is a covariant representation of $(R,\mathcal X)$, then there
exists a unique ring homomorphism
\[
\eta:\mathcal T_{\mathcal X}\to D
\]
such that $\eta\circ j=\sigma$, $\eta\circ T_{\mathcal X}=T$, and
$\eta\circ S_{\mathcal X}=S$.
\end{theorem}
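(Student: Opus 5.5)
Uniqueness is immediate: $\mathcal{T}_{\mathcal{X}}$ is generated as a ring by $j(R)$, $T_{\mathcal{X}}(X)$ and $S_{\mathcal{X}}(X')$, and $\eta$ is prescribed on each generator. The work is in existence, which I plan to get by building a concrete "monomial" model of $\mathcal{T}_{\mathcal{X}}$ and defining $\eta$ on it.

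Consider the abelian group
\[
\mathcal{F}=\bigoplus_{(n,m)\in\mathbb{N}^2} X^{\otimes n}\otimes_R R\otimes_R (X')^{\otimes m}.
\]
Because $R$ has local units and all module structures are non-degenerate (Proposition 2.5 and condition (ii) of Definition 2.13), the middle $R$ can be absorbed, so the $(n,m)$ summand is $X^{\otimes n}\otimes_R (X')^{\otimes m}$ for $n+m\ge 1$ and $R$ for $n=m=0$.

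Define $\Theta:\mathcal{F}\to\mathcal{T}_{\mathcal{X}}$ by
\[
x_1\otimes\cdots\otimes x_n\otimes r\otimes\phi_m\otimes\cdots\otimes\phi_1\mapsto T_{x_1}\cdots T_{x_n}\,r\,T_{\phi_m}\cdots T_{\phi_1}.
\]
This is well defined because $T_{\mathcal{X}}$ and $S_{\mathcal{X}}$ are bimodule maps. It is surjective, since the relations $T_\phi T_x=\phi(x)$ and $rT_x=T_{rx}$ etc. let any word in the generators be rewritten as a sum of words of the form $T\cdots T\,r\,S\cdots S$.

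Define $\eta_0:\mathcal{F}\to D$ by the same formula with $T,\sigma,S$ in place of $T_x$, $r$, $T_\phi$. Then $\eta_0$ is well defined by (iii) of Definition 2.17.

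I then put on $\mathcal{F}$ the product suggested by the relation $S(\phi)T(x)=\sigma(\phi(x))$: contract the inner $X'$-tensors of the left factor against the $X$-tensors of the right factor. For example, $(\ldots\otimes\phi)\cdot(x\otimes\ldots)$ becomes $(\ldots)\,\phi(x)\,(\ldots)$, iterated until one side runs out. Both $\Theta$ and $\eta_0$ are multiplicative for this product, the first by the relations in $\mathcal{T}_{\mathcal{X}}$ and the second by (iv).

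It therefore suffices to show that $\Theta$ is injective; then $\eta=\eta_0\circ\Theta^{-1}$ is a ring homomorphism with the required properties.

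Injectivity of $\Theta$ is the main obstacle. Let $a=\sum_{n,m}a_{n,m}$ with $\Theta(a)=0$. I will recover the components $a_{n,m}$ from the operator $\Theta(a)$ on $T(X)$ by induction on $m$.

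For each $n$, restrict $\Theta(a)$ to $X^{\otimes 0}=R$. Only the summands with $m=0$ survive there, because $T_{\phi}$ kills degree $0$, and $\Theta(a_{n,0})$ maps $R$ into $X^{\otimes n}$ by $r\mapsto a_{n,0}\cdot r$. Non-degeneracy of $X^{\otimes n}$ as a right module (local units) then gives $a_{n,0}=0$.

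Next, restrict to $X^{\otimes 1}$. Now $a_{n,0}$ is already zero, and $a_{n,1}$ acts as the compact operator $x\mapsto a_{n,1}\cdot x$ from $X$ to $X^{\otimes n}$. This vanishes identically, and Proposition 2.5(iii) applied to the functional module $X^{\otimes n}\oplus X$ (with (FS) inherited by tensor products) forces $a_{n,1}=0$.

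Inductively, on $X^{\otimes m}$ the surviving component $a_{n,m}$ acts as an element of $\mathcal{K}_R$ between tensor powers. I need these powers to satisfy (FS) in order to invoke the injectivity argument of Proposition 2.5. Verifying that (FS) passes to $X^{\otimes m}$ is the delicate point. I will check it by induction using $\Delta(R)X=X$: given $x\otimes y$, choose a local unit $e$ in $R$ and $\Theta_1\in\mathcal{K}(X)$ with $\Theta_1 x=x$, and use $\Theta_1\otimes(\text{unit for } y)$ built from finite-rank pieces. The dual side $X'$ is handled symmetrically.
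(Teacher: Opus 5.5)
Your argument is correct, and it is organized differently from the paper, which gives no proof of its own but cites \cite{CO11} (Theorem~1.7 and Proposition~4.2). In that reference the Toeplitz ring is first built abstractly as the universal ring for covariant representations. Condition (FS) is then what guarantees that the concrete ring generated by creation and annihilation operators on $T(X)$ is this universal ring, i.e.\ that the Fock representation is faithful. You never construct the abstract object. Your $\mathcal{F}$ with the contraction product is in effect an explicit model of the universal ring, since every covariant representation factors through it via $\eta_0$. Injectivity of $\Theta$ is exactly faithfulness of the Fock representation, which you prove degree by degree by restricting to $X^{\otimes m}$ and running the argument of Proposition~2.5(iii). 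The citation buys brevity and the general framework of $R$-systems. Your route is self-contained and yields the normal form $\mathcal{T}_{\mathcal{X}}\cong\bigoplus_{n,m}X^{\otimes n}\otimes_R (X')^{\otimes m}$ as a by-product. It also shows exactly what is used: (FS) for the tensor powers, and local units only through non-degeneracy of $X_R$ and ${}_R X'$. Neither $\Delta(R)X=X$ nor the functional homomorphism $\mathcal{U}$ enters.

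One step should be tightened. Read literally, $\Theta_1\otimes(\text{unit for } y)$ is not an operator on $X^{\otimes k}\otimes_R X$, because a compact operator on the second leg is not left $R$-linear. Here is what works. Given $\xi\otimes y$, write $\Theta_1=\sum_a u_a\otimes\alpha_a\in\mathcal{K}_R(X^{\otimes k})$ with $\Theta_1\xi=\xi$, so that $\xi\otimes y=\sum_a u_a\otimes\alpha_a(\xi)y$. Choose $\Theta'=\sum_b v_b\otimes\beta_b\in\mathcal{K}_R(\mathcal{X})$ fixing the finitely many vectors $\alpha_a(\xi)y$. Then $\sum_{a,b}(u_a\otimes v_b)\otimes(\beta_b\otimes\alpha_a)$ sends $\xi\otimes y$ to $\sum_a u_a\otimes\Theta'(\alpha_a(\xi)y)=\xi\otimes y$.

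For injectivity you only need the dual half of (FS). It is proved the same way: for $\psi\otimes\phi$ with $\phi\in(X')^{\otimes k}$, first fix $\phi$ by some $\sum_a u_a\otimes\alpha_a\in\mathcal{K}_R(X^{\otimes k})$, then fix the finitely many $\psi\cdot\phi(u_a)\in X'$ and use the same shape of element. With it, if $\psi_i\cdot\Theta_2=\psi_i$ for the $(X')^{\otimes m}$-legs of $a_{n,m}$ and $\Theta_2=\sum_j y_j\otimes\chi_j$, then $a_{n,m}=a_{n,m}\Theta_2=\sum_j(a_{n,m}\cdot y_j)\otimes\chi_j=0$.
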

\begin{proof}
See \cite[Theorem~1.7 and Proposition~4.2]{CO11}.
\end{proof}
The relative algebraic Cuntz-Pimsner ring $\mathcal{O}_{\mathcal{X},\mathcal{I}}$ also has a universal property \cite[Theorem 3.18]{CO11} but we will not use it in this article.
\section{Proofs of main results}
Let $\mathcal{X} = (X,X',g)$ be a correspondence over a ring with local units $R$, let $\mathcal{I}$ be a two-sided ideal of $R$ that has local units and acts on $X$ on the left by compact operators. Let $ E : \mathbf{Rings} \rightarrow \mathbf{Ab}$ be a homotopy invariant, split-exact, $M$-stable functor  (we refer to \cite{CMR07} and \cite{CT06} for the definitions of these notions and classical computations using quasihomomorphisms). There is a natural short exact sequence of rings

\[0 \longrightarrow{} \mathcal{K}_R(T(\mathcal{X} \cdot \mathcal{I})) \longrightarrow{} \mathcal{T}_{\mathcal{X}}\longrightarrow{} \mathcal{O}_{\mathcal{X},\mathcal{I}} \longrightarrow{} 0\]

Write $T(\mathcal{X} \cdot \mathcal{I}) = \mathcal{I} \oplus T^{1}(\mathcal{X} \cdot \mathcal{I})$ where $T^{1}(\mathcal{X} \cdot \mathcal{I}) = \bigoplus_{n \geq 1}{(\mathcal{X\cdot \mathcal{I}})^{\otimes n}}$. By Proposition 2.11 and $M$-stability of $E$ the upper left corner embedding $e : \mathcal{I} \rightarrow \mathcal{K}_R(T(\mathcal{X} \cdot \mathcal{I}))$ induces an isomorphism
\[E(\mathcal{K}_R(T(\mathcal{X} \cdot \mathcal{I}))) \cong E(\mathcal{I})\]

The main difficulty is thus to compute the value of $E$ on $\mathcal{T}_{\mathcal{X}}$. We have a map $j : R \rightarrow \mathcal{T}_{\mathcal{X}}$ by definition of the Toeplitz ring. We want to build a map that is the inverse to $j$ once we apply $E$. We define a quasi-homomorphism
\[
\pi =
\begin{tikzcd}
\mathcal{T}_\mathcal{X} \arrow[r, "\pi_{0}", shift left]
\arrow[r, "\pi_{1}"', shift right]
& {\mathcal{L}_{R}(T(\mathcal{X}))\rhd \mathcal{K}_{R}(T(\mathcal{X}))}.
\end{tikzcd}
\]
 by letting $\pi_{0}$ be the natural inclusion and by defining $\pi_{1}$ to be such that for all pure tensors $p \in T(X)$,
\newline
\[
\begin{aligned}
\pi_{1}(T_{x})(p)
&=
\begin{cases}
0 & \text{if } \deg(p)=0,\\
x \otimes p & \text{otherwise},
\end{cases}
\\[1em]
\pi_{1}(T_{\phi})(p)
&=
\begin{cases}
0 & \text{if } \deg(p)\le 1,\\
\phi(p_{1}) \otimes p_{2}\otimes \cdots \otimes p_{n} & \text{otherwise},
\end{cases}
\\[1em]
\pi_{1}(a)(p)
&=
\begin{cases}
0 & \text{if } \deg(p)=0,\\
a \cdot p & \text{otherwise}.
\end{cases}
\end{aligned}
\]

One can check that both $\pi_0$ and $\pi_1$ preserve the pairing and thus define a homomorphism on $\mathcal{T}_{\mathcal{X}}$ according to Theorem 2.22.
\begin{lemma}
    For all $\tau \in \mathcal{T}_{\mathcal{X}}$,
    \[\pi_{0}(\tau)-\pi_{1}(\tau) \in \mathcal{K}_{R}(T(X))\]
\end{lemma}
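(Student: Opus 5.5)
The plan is to reduce to generators using the ideal property of compact operators, and then to compute $\pi_0-\pi_1$ explicitly on generators. Let $\mathcal{C} = \{\tau \in \mathcal{T}_{\mathcal{X}} : \pi_0(\tau)-\pi_1(\tau) \in \mathcal{K}_R(T(X))\}$. This set is clearly an additive subgroup.

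It is also closed under products. Indeed, for $\tau,\tau'\in\mathcal{C}$ we have
\[\pi_0(\tau\tau')-\pi_1(\tau\tau') = \pi_0(\tau)\bigl(\pi_0(\tau')-\pi_1(\tau')\bigr) + \bigl(\pi_0(\tau)-\pi_1(\tau)\bigr)\pi_1(\tau').\]
Both $\pi_0$ and $\pi_1$ take values in $\mathcal{L}_R(T(\mathcal{X}))$, and $\mathcal{K}_R(T(\mathcal{X}))$ is a two-sided ideal there by Proposition 2.5(iii), since $T(\mathcal{X})$ satisfies (FS). Hence $\mathcal{C}$ is a subring. Because $\mathcal{T}_{\mathcal{X}}$ is generated by the $T_x$, the $T_\phi$ and $r\cdot\mathrm{id}$, it suffices to check these three families of generators.

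On generators the difference can be read off from the matrix pictures in the proof of Proposition 2.19.
\begin{itemize}
\item For $T_x$: $\pi_0(T_x)-\pi_1(T_x)$ is the map $r\mapsto x\cdot r$ on degree $0$ and zero elsewhere. This is $\theta_{x,e}$-type, i.e. the rank-one operator $x\otimes \mathbf{1}$ from $R\subset T(X)$ into $X\subset T(X)$. Concretely, on the finitely many elements involved, it equals $\sum$ of elementary tensors $x\otimes u$ with $u\in R = (X')^{\otimes 0}\subset T(X')$. Using local units, pick an idempotent $u$ with $x u = x$. The operator $x\otimes u$ sends $r\in R$ to $x\,u r$, which is not literally $x r$ for all $r$. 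So I should phrase this instead as: the difference equals $T_x P_0$.
\item For $T_\phi$: the difference is $T_\phi P_1$, where $P_1$ is the projection onto degree $1$. It sends $y\in X$ to $\phi(y)\in R$ and kills all other degrees. This is the operator $1\otimes\phi$ with $1\in R$ degree $0$.
\item For $r\cdot\mathrm{id}$: the difference is $rP_0$, i.e. $r\otimes 1$ in degree $(0,0)$.
\end{itemize}

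The main obstacle is that, lacking a unit, one must check that these operators genuinely lie in $X^{\otimes n}\otimes_R (X')^{\otimes m}\subset T(X)\otimes_R T(X')$. The fix is to write $rP_0 = r\otimes_R 1$ interpreted through $R\otimes_R R\cong R$ (non-degeneracy, Proposition 2.5(ii) and local units). Explicitly, $r\in R\cong R\otimes_R R = \mathcal{K}_R(R)$, and its image in $\mathcal{L}_R(T(X))$ under the corner inclusion is precisely $rP_0$. Similarly, $T_xP_0$ is the image of $x\in X\cong X\otimes_R R$, since $X$ is non-degenerate as a right $R$-module; so $T_xP_0$ is the element of $X\otimes_R R\subset \mathcal{K}_R(T(\mathcal{X}))$. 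Likewise $T_\phi P_1$ corresponds to $\phi\in X'\cong R\otimes_R X'$. Writing $x = x e$ with $e$ a local unit, one gets $T_xP_0 = \theta_{x,e}$ with $e\in R=(X')^{\otimes 0}$, and one verifies that $\theta_{x,e}(r) = x\,e r$. This agrees with $x r$ only after using $x\otimes e r$. To handle this correctly, one uses the identification $X\otimes_R R\cong X$ directly rather than choosing $e$: the element $x'\otimes s$ with $x = x's$ acts by $r\mapsto x' s r = xr$. Such $x',s$ exist by non-degeneracy $X = X\cdot R$ (since $XR$ consists of sums, the element is a finite sum, which is fine).

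This completes the check on generators, and hence $\mathcal{C} = \mathcal{T}_{\mathcal{X}}$.
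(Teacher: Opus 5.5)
Your proof is correct, but it takes a different route from the paper's. The paper never uses the ideal property of $\mathcal{K}_R(T(\mathcal{X}))$. It asserts that $\mathcal{T}_{\mathcal{X}}$ is additively spanned by monomials $\tau=T_{p_1}\cdots T_{p_k}T_{\phi_1}\cdots T_{\phi_l}$ and computes the difference on each monomial directly. It finds that $\pi_0(\tau)-\pi_1(\tau)$ vanishes outside degree $l$, and that on degree $l$ it equals $\pi_0(\tau)$. That operator is the compact operator built from $p_1\otimes\cdots\otimes p_k$ and $\phi_1\otimes\cdots\otimes\phi_l$, lying in the block $X^{\otimes k}\otimes_R(X')^{\otimes l}$.

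You instead show that $\{\tau:\pi_0(\tau)-\pi_1(\tau)\in\mathcal{K}_R(T(X))\}$ is a subring. This uses your product identity together with the fact that the image of $\mathcal{K}_R(T(\mathcal{X}))$ is a two-sided ideal in $\mathcal{L}_R(T(\mathcal{X}))$. Only the three generator checks $T_xP_0$, $T_\phi P_1$, $rP_0$ then remain.

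This is the standard quasi-homomorphism argument, and it is more economical. It needs no normal form for elements of $\mathcal{T}_{\mathcal{X}}$; the paper uses one without proof, and it relies on $T_\phi T_x=\phi(x)\cdot\mathrm{id}$ and on absorbing the generators $r\cdot\mathrm{id}$ via non-degeneracy. It also avoids bookkeeping with iterated pairings. In return, the paper's computation describes the difference explicitly on every monomial and shows it is concentrated in a single matrix block.

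One remark on your $T_x$ case: the worry you raise there is unfounded. If $u\in R$ is an idempotent with $xu=x$, then $\theta_{x,u}(r)=x(ur)=(xu)r=xr$ for $r\in R$. Also $\theta_{x,u}$ vanishes in positive degree, since $u$ pairs trivially with tensors of degree $\neq 0$. So $T_xP_0=\theta_{x,u}$ exactly. Similarly, $T_\phi P_1=\theta_{u,\phi}$ for a local unit $u$ with $u\phi=\phi$, which exists because $RX'=X'$. Likewise $rP_0=\theta_{r,u}$ for a local unit $u$ with $ru=r$.

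Your eventual argument through $X\cong X\otimes_R R$ amounts to the same thing. Since $R$ has no unit, your expressions $1\otimes\phi$ and $r\otimes 1$ should be read this way. With that reading, the generator checks, and hence your proof, are complete.
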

\begin{proof}
    Every element of the Toeplitz ring can be written as a finite sum of elements of the form $\tau = T_{p_{1}}\ldots T_{p_{k}}T_{\phi_{1}}\ldots T_{\phi_{l}}$ for $k,l \geq 0$, $p_1, \ldots p_k \in X$ and $\phi_1, \ldots, \phi_l \in X'$.  $\pi_{0}(\tau)-\pi_{1}(\tau)$ vanishes on homogeneous tensors of degree $\ge l+1$ and of degree $\le l-1$. Let $q = q_{1}\otimes\ldots \otimes q_{l}$ be a pure tensor of degree $l$. We have \[(\pi_{0}(\tau)-\pi_{1}(\tau))(q)=(p_{1}\otimes\ldots \otimes p_{k}\cdot (\phi_{1}(\ldots\phi_{l}(q_{1})\ldots)q_{l})\] Hence $\pi_{0}(\tau)-\pi_{1}(\tau)$ belongs to $\mathcal{K}_{R}(T(X)) = \bigoplus\limits_{n,m\in \mathbb{N}}X^{\otimes n}\otimes_R (X')^{\otimes m}$.
\end{proof}
\begin{theorem}
    Let $\mathcal{X}$ be an $R$-correspondence over a ring with local units $R$. Every homotopy invariant, split-exact, $M$-stable functor $E : \mathbf{Rings} \rightarrow \mathbf{Ab}$ sends the inclusion $j : R \rightarrow \mathcal{T}_{\mathcal{X}}$ to an isomorphism $E(\mathcal{T}_{\mathcal{X}}) \cong E(R)$.
\end{theorem}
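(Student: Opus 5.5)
The plan follows Pimsner's argument in Cuntz's formalism. By Lemma 3.1 and split-exactness, the quasi-homomorphism $(\pi_0,\pi_1)$ induces a map
\[\pi_* : E(\mathcal{T}_\mathcal{X}) \to E(\mathcal{K}_R(T(\mathcal{X}))).\]
Compose it with the isomorphism $E(\mathcal{K}_R(T(\mathcal{X}))) \cong E(R)$. That isomorphism comes from Proposition 2.11: write $T(\mathcal{X}) = R \oplus T^1(\mathcal{X})$ and note that $T^1(\mathcal{X})$ inherits a functional homomorphism into some $R^{(J)}$, built from the tensor-product functional homomorphisms of $\mathcal{X}^{\otimes n}$ summed over $n$. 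Call the resulting map $\beta : E(\mathcal{T}_\mathcal{X}) \to E(R)$. I will show that $\beta\circ E(j) = \mathrm{id}_{E(R)}$ and $E(j)\circ\beta = \mathrm{id}_{E(\mathcal{T}_\mathcal{X})}$.

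\textbf{Left inverse.} Restrict the quasi-homomorphism along $j$. For $r \in R$, $\pi_0(r)$ is left multiplication by $r$ on all of $T(X)$, while $\pi_1(r)$ is the same operator with the degree-$0$ part killed. Their difference is $r P_0$, and $\pi_1(r)$ is orthogonal to $rP_0$. For orthogonal quasi-homomorphisms the standard rule gives $E(\pi_0\circ j,\pi_1\circ j) = E(\pi_0 j - \pi_1 j)$. Since $\pi_0 j - \pi_1 j$ is exactly the corner embedding $r\mapsto rP_0$ of $R$ into $\mathcal{K}_R(R\oplus T^1(\mathcal{X}))$, we get $\beta E(j) = \mathrm{id}$.

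\textbf{Right inverse.} This is the main obstacle, and I will handle it with Pimsner's rotation trick.

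1. Compute $E(j)\circ\beta$ via naturality of quasi-homomorphisms. It equals the class of the quasi-homomorphism $\mathcal{T}_\mathcal{X} \rightrightarrows \mathcal{T}_{\mathcal{X}}$-valued operators obtained by tensoring with $\mathcal{T}_\mathcal{X}$. Concretely, use the functional module $T(\mathcal{X})\otimes_R \mathcal{T}$ over $\mathcal{T}=\mathcal{T}_\mathcal{X}$, with the pair $(\pi_0\otimes 1,\pi_1\otimes 1)$. Proposition 2.11 and Proposition 2.15, applied to $\mathcal{T}$-modules, identify $E(\mathcal{K}_\mathcal{T}(T(\mathcal{X})\otimes_R\mathcal{T}))$ with $E(\mathcal{T})$.

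2. Replace $\pi_0\otimes 1$ by a second representation $\tau\mapsto$ ``$\tau$ acting on the last tensor factor'', in the spirit of Pimsner's $X^{\otimes n}\otimes \mathcal{T}$. The key identity to check is that $\pi_0\otimes 1$ and this new representation agree modulo compacts, and that they are connected by a homotopy of quasi-homomorphisms. Build the homotopy with the polynomial rotation
\[\begin{pmatrix} \cos & -\sin\\ \sin&\cos\end{pmatrix}\]
replaced by its algebraic analogue, namely the standard $M_2$ polynomial homotopy between $\mathrm{diag}(1,1)$ and the flip. Each representation is defined through the universal property Theorem 2.22, so it suffices to exhibit covariant representations. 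Homotopy invariance makes the quasi-homomorphism class constant along the homotopy.

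3. After the rotation, the pair splits as an orthogonal sum. One part is a degenerate quasi-homomorphism, with equal components, contributing $0$. The other is the identity of $\mathcal{T}$ in the corner, contributing $\mathrm{id}_{E(\mathcal{T})}$.

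The algebraic difficulty is that the unitary used by Pimsner is the shift between $T(X)\otimes\mathcal{T}$ and $T^1(X)\otimes\mathcal{T}$. It must be realized by a functional isomorphism; the shift $\xi\otimes t \mapsto \xi\otimes T_x t$ is not invertible, so I would use the $2\times2$ matrix trick with the partial isometries $T_x$, $T_\phi$ summed over local units. Condition (FS) supplies $\Theta$ with $\sum T_{x_i}T_{\phi_i}$ acting as a local identity on finitely many elements, so the needed identities hold on each finitely generated piece. Homotopy invariance then applies to the polynomial path in $M_2$.
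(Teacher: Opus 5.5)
Your architecture matches the paper's: the same quasi-homomorphism $(\pi_0,\pi_1)$, the same orthogonality argument for $\beta\circ E(j)=\mathrm{id}$, base change along $j$ to $T(\mathcal X)\otimes_R\mathcal T$, and the same final orthogonal splitting. The gap is in step 2, which is the whole content of the right-inverse: you never construct the homotopy, and the mechanism you sketch for it would not work.

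\emph{The endpoint is not well defined.} ``$\tau$ acting on the last tensor factor'' makes no sense on $X^{\otimes n}\otimes_R\mathcal T$ for $n\ge 1$. Left multiplication by $T_x$ does not commute with $R$: the map would send $pr\otimes\sigma$ to $p\otimes T_{rx}\sigma$ but $p\otimes r\sigma$ to $p\otimes T_{xr}\sigma$. Your shift $\xi\otimes t\mapsto \xi\otimes T_x t$ has the same balancing problem. Left multiplication only makes sense on the degree-zero summand $R\otimes_R\mathcal T\cong\mathcal T$. The correct endpoint is $\lambda_1+\pi_1\otimes 1$, where $\lambda_1$ is left multiplication on that summand and $0$ elsewhere.

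\emph{There is no unitary shift to rotate with.} The natural candidate is $V\colon X\otimes_R\mathcal T\to\mathcal T$, $x\otimes\tau\mapsto T_x\tau$. It misses the degree-zero part, and its would-be range projection $\sum_i T_{x_i}T_{\phi_i}$ always vanishes on degree $0$; when $\sum_i x_i\otimes\phi_i=\Delta(r)$ it equals $r-rP_0$, the Toeplitz defect. So (FS) does not make it a local identity on the degree-$0$/degree-$1$ block, which is exactly where the rotation happens. Worse, adjointability of $V$ would force $\Delta(R)\subseteq\mathcal K_R(\mathcal X)$, which is not assumed. Identities that hold only ``on each finitely generated piece'' also do not assemble into a ring homomorphism $\mathcal T\to\mathcal L_{\mathcal T}(T(\mathcal X)\otimes_R\mathcal T)[t]$, and that is what homotopy invariance needs. (Also, the flip has determinant $-1$, so it is not polynomially connected to $1$ in $GL_2$; the algebraic rotation uses $\bigl(\begin{smallmatrix}0&-1\\1&0\end{smallmatrix}\bigr)$.)

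\emph{The missing idea.} No invertible operator is needed. The universal property of $\mathcal T_{\mathcal X}$ asks only for bimodularity and $S(\phi)T(x)=\sigma(\phi(x))$, with no condition on $T(x)S(\phi)$, so you can rotate directly on generators. Let $\lambda_0(T_x)$ send $\tau$ in degree $0$ to $x\otimes\tau$, and let $\lambda_0(T_\phi)$ send $y\otimes\tau$ in degree $1$ to $\phi(y)\tau$, both being zero elsewhere. Put $H(r)=r$ and
\[
H(T_x)=(1-t^2)\lambda_0(T_x)+(2t-t^3)\lambda_1(T_x)+(\pi_1\otimes 1)(T_x),
\]
\[
H(T_\phi)=(1-t^2)\lambda_0(T_\phi)+t\,\lambda_1(T_\phi)+(\pi_1\otimes 1)(T_\phi).
\]
All cross terms vanish for degree reasons, and
\[
H(T_\phi)H(T_x)=\bigl((1-t^2)^2+t(2t-t^3)\bigr)\phi(x)=\phi(x).
\]
Hence $H$ is a ring homomorphism $\mathcal T\to\mathcal L_{\mathcal T}(T(\mathcal X)\otimes_R\mathcal T)[t]$ with $H(0)=\pi_0\otimes 1$ and $H(1)=\lambda_1+\pi_1\otimes 1$. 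On generators $H-\pi_1\otimes 1$ is compact, being supported in degrees $0$ and $1$; since $\mathcal K$ is an ideal, it is compact on all of $\mathcal T$. So $(H,\pi_1\otimes 1)$ is a polynomial homotopy of quasi-homomorphisms, and your step 3 then finishes the proof exactly as in the paper.
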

\begin{proof}

    First we compute $E(\pi \circ j)$.
    Let $\iota = \iota_{R,T(X)} : R \rightarrow \mathcal{K}_{R}(T(X))$ be the inclusion in the upper left corner $(\iota(r)(p) = 0$ if deg$(p) \geq 1$ and $\iota(r)(p)= r \cdot p$ otherwise). The map $E(\iota)$ is an isomorphism by Proposition 2.11. We compute
    \[E(\pi \circ j) = E(j,\pi_{1}\circ j) = E (\iota + \pi_{1}\circ j, \pi_{1}\circ j) = E(\iota)\]
    as $\pi_{1} \circ j$ and $\iota$ are orthogonal quasi-homomorphisms (see \cite[Proposition 3.3]{CMR07}).
    \newline
    Now we compute $E(j) \circ E(\pi)$. Write $\mathcal{T}_\mathcal{X} = \mathcal{T}$ for simplicity. The map $j : R \rightarrow \mathcal{T}$ makes $(\mathcal{T},\mathcal{T},\mu)$ an $R$-$\mathcal{T}$ correspondence. Consider the following diagram :
    
   \begin{tikzcd}
\mathcal{T} \arrow[r, "\pi_{0}", shift left] \arrow[r, "\pi_{1}"', shift right] & {\mathcal{L}_{R}(T(\mathcal{X}))\rhd \mathcal{K}_{R}(T(\mathcal{X}))} \arrow[dd, "{[-\otimes \mathrm{id}_{\mathcal{T}}]}", shift left=2] \arrow[dd, "{[-\otimes \mathrm{id}_{\mathcal{T}}]}"', shift right=2] \\
                                                                                            &                                                                                                                                                                                                 \\
                                                                                            & {\mathcal{L}_{\mathcal{T}}(T(\mathcal{X})\otimes_{R} \mathcal{T})\rhd \mathcal{K}_{\mathcal{T}}(T(\mathcal{X})\otimes_{R} \mathcal{T})}                                                    
\end{tikzcd}

Let us first verify that $[-\otimes \mathrm{id}_{\mathcal{T}}]$ maps the compact operators to the compact operators.
$\mathcal{K}_{\mathcal{T}}(T(\mathcal{X})\otimes_{R} \mathcal{T}) = T(X)\otimes_{R} \mathcal{T}\otimes_{\mathcal{T}} \mathcal{T}\otimes_{R} T(X') \cong T(X)\otimes_{R} \mathcal{T}\otimes_{R} T(X')$
and 
$\mathcal{K}_{R}(T(\mathcal{X})) = T(X)\otimes T(X') \cong T(X)\otimes R \otimes T(X')$. Let $p \in X^{\otimes n},q \in X'^{\otimes m}$ and $r\in R$. The compact operator $p\otimes r \otimes q$ acting on $T(\mathcal{X})\otimes_{R} \mathcal{T}$ is equal to $p\otimes j(r) \otimes q \in T(X)\otimes_{R} \mathcal{T}\otimes_{R} T(X')$. Hence $[-\otimes \mathrm{id}_{\mathcal{T}}]$ restricted to $\mathcal{K} = \mathcal{K}_{R}(T(\mathcal{X}))$ is the map induced by $j$ on each coordinate. We write $[-\otimes \mathrm{id}_{\mathcal{T}}] : \mathcal{L}_{R}(T(\mathcal{X})) \rightarrow \mathcal{L}_{\mathcal{T}}(T(\mathcal{X})\otimes_{R} \mathcal{T})$ as $\tilde{j}$.
 $E(\tilde{j}|_\mathcal{K})$ equals $E(j)$ up to  stabilization isomorphisms. \[E(j) = E(\iota_{\mathcal{T},T(\mathcal{X})\otimes_{R} \mathcal{T}})^{-1}E(\tilde{j}|_\mathcal{K})E(\iota).\]
\newline
Here $\iota_{\mathcal{T},T(\mathcal{X})\otimes_{R} \mathcal{T}} : \mathcal{T} \rightarrow \mathcal{K}_{\mathcal{T}}(T(\mathcal{X})\otimes_{R} \mathcal{T})$ is the upper left corner embedding. We now define $\lambda_{0}, \lambda_{1} : \mathcal{T} \rightarrow \mathcal{L}_{\mathcal{T}}(T(\mathcal{X})\otimes_{R} \mathcal{T})$ two ring homomorphisms. We let $\lambda_{1}(\tau)$ be zero on $p\otimes \tau$ with deg$(p) \geq 1$ and the operator of left multiplication by $\tau$ on $R\otimes_R \mathcal{T} \cong \mathcal{T}$ (this isomorphism holds because we assume the left action to be non-degenerate). Let $\lambda_{0}(T_{x})$ send $p\otimes \tau$ to zero
if deg$(p)\geq 1$ and to $x\otimes p \otimes \tau$ otherwise. We let $\lambda_{0}(T_{\phi})$ send $p\otimes \tau$ to zero if deg$(p)\geq 2$ or if deg$(p) = 0$ and to $\phi(p)\cdot \tau$ if deg$(p) = 1$. Finally we let $\lambda_{0}(r)$ be zero on tensors of degree $\geq 1$ and send $\tau \in \mathcal{T}$ to $r\otimes \tau$. One easily checks that $\lambda_0$ preserves the pairing of $\mathcal{X}$. We can write elements of $\mathcal{L}_{\mathcal{T}}(T(\mathcal{X})\otimes_{R} \mathcal{T})$ as infinite matrices by giving their action on each subspace of the form $X^{\otimes n}\otimes \mathcal{T}$. We have 

    \[\lambda_0(T_{x}) = \begin{pmatrix}
    0 & 0 & 0 & \ldots\\
T_x & 0 & 0 & \ldots\\
0 & 0 & 0 & \ldots\\
\ldots
\end{pmatrix}, 
    \lambda_0(T_{\phi}) = \begin{pmatrix}
    0 & T_\phi & 0 & \ldots\\
0 & 0 & 0 & \ldots\\
0 & 0 & 0 & \ldots\\
\ldots
\end{pmatrix},
    \lambda_{0}(r) = \begin{pmatrix}
r & 0 & 0 & \ldots\\
0 & 0 & 0 & \ldots\\
0 & 0 & 0 & \ldots\\
\ldots
\end{pmatrix}\]

The last step of the proof is to build a polynomial homotopy
\[H : \pi_{0}\otimes \mathrm{id} \sim \lambda_{1} + \pi_{1}\otimes \mathrm{id}.\]
We define, using a “rotational” homotopy similar to that of \cite{CT06}
\begin{align*}
H(T_{x})
&= (1 - t^{2})\lambda_{0}(T_{x})
 + (2t - t^{3})\lambda_{1}(T_{x})
 + (\pi_{1} \otimes \mathrm{id})(T_{x}), \\
H(T_{\phi})
&= (1 - t^{2})\lambda_{0}(T_{\phi})
 + t\,\lambda_{1}(T_{\phi})
 + (\pi_{1} \otimes \mathrm{id})(T_{\phi}), \\
H(r)
&= r \cdot \mathrm{id}.
\end{align*}
In other words
\[H(T_{x}) =  \begin{pmatrix}
    (2t-t^3)T_x & 0 & 0 & 0 &\ldots\\
(1-t^2)T_x & 0 & 0 & 0 &\ldots\\
0 & T_x & 0 & 0 &\ldots\\
0 & 0 & T_x & 0 &\ldots\\
\ldots
\end{pmatrix}\]\\
\[H(T_{\phi}) = \begin{pmatrix}
    tT_\phi & (1-t^2)T_\phi & 0 & 0 &\ldots\\
0 & 0 & T_\phi & 0 &\ldots\\
0 & 0 & 0 & T_\phi &\ldots\\
0 & 0& 0 & 0 &\ldots\\
\ldots
\end{pmatrix} \]\\

A direct computation shows that $H$ indeed preserves the pairing
and hence defines a ring homomorphism 
\[H : \mathcal{T} \rightarrow \mathcal{L}_{\mathcal{T}}(T(\mathcal{X})\otimes_{R} \mathcal{T})[t]\] 
such that $H(0) = \pi_{0}\otimes \mathrm{id}$ and $H(1) = \lambda_{1} + \pi_{1}\otimes \mathrm{id}$. Hence, using the homotopy invariance of $E$ 
\[E(\tilde{j}\circ \pi)= E(\pi_{0}\otimes \mathrm{id},\pi_{1}\otimes \mathrm{id}) = E(\lambda_{1} +\pi_{1}\otimes \mathrm{id}, \pi_{1}\otimes \mathrm{id}) = E(\lambda_{1})\]
as $\lambda_{1}$ and $\pi_{1}\otimes \mathrm{id}$ are orthogonal homomorphisms. But $\lambda_{1} = \iota_{\mathcal{T},T(\mathcal{X})\otimes_{R} \mathcal{T}}$ is the upper left corner embedding of $\mathcal{T}$ in $\mathcal{K}_{\mathcal{T}}(T(\mathcal{X})\otimes_{R} \mathcal{T})$. Thus $E(\lambda_{1})$ is an isomorphism by Proposition 2.11 and $M$-stability of $E$.
\newline
Since $E(\tilde{j}\circ \pi) = E(\tilde{j}|_{\mathcal{K}})\circ E(\pi)$, this finishes the proof because we now have two isomorphisms which are inverses of each other :
\[E(j)E(\iota)^{-1}E(\pi) = \mathrm{id}_{\mathcal{T}},\]
\[E(\pi)E(j)E(\iota)^{-1} = \mathrm{id}_R. \qedhere\]
\end{proof}
\begin{lemma}
    Let $E$ be a split-exact and $M$-stable functor. Let $\iota_{R, T(\mathcal{X})}$ be the upper left corner embedding $R \rightarrow \mathcal{K}_{R}(T(\mathcal{X}))$.  We write $i : \mathcal{I} \rightarrow R$, and $i' : \mathcal{K}_R(T(\mathcal{X})\cdot \mathcal{I}) \rightarrow \mathcal{T}_{\mathcal{X}}$ for the inclusion maps. We have
    \[E(\pi)\circ E(i' \circ \iota_{\mathcal{I},T(\mathcal{X})\cdot\mathcal{I}}) = E(\iota_{R, T(\mathcal{X})})\circ(E(i)-E(\mathcal{X}))\]
    where $E(\mathcal{X})$ is defined to be the map induced by the structure of $\mathcal{I}$-$R$ correspondence on $R$.
\end{lemma}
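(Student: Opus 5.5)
Since $E(\pi)$ is defined through split-exactness, the plan is to compute $E(\pi)\circ E(i'\circ\iota_{\mathcal{I},T(\mathcal{X})\cdot\mathcal{I}})$ as the class of the restricted quasi-homomorphism
\[
(\pi_0\circ i'\circ \iota_{\mathcal{I}},\; \pi_1\circ i'\circ\iota_{\mathcal{I}}) : \mathcal{I} \rightrightarrows \mathcal{L}_R(T(\mathcal{X}))\rhd \mathcal{K}_R(T(\mathcal{X})),
\]
using naturality of quasi-homomorphism classes under precomposition by ring homomorphisms (\cite{CMR07}). Here $\iota_{\mathcal{I}}=\iota_{\mathcal{I},T(\mathcal{X})\cdot\mathcal{I}}$ sends $a\in\mathcal{I}$ to $a\cdot P_0$. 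By the Remark after the definition of $J_{\mathcal{X},\mathcal{I}}$, we have $a P_0 = a\cdot \mathrm{id} - \sum_k T_{x_k}T_{\phi_k}$, where $\Delta(a)=\sum_k x_k\otimes\phi_k$.

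The first step is to compute both components explicitly on $T(X)$. The map $\pi_0(aP_0)$ is $a$ acting on degree $0$ and zero elsewhere, i.e.\ $\pi_0\circ i'\circ\iota_{\mathcal{I}} = \iota_{R,T(\mathcal{X})}\circ i$. For the other component, $\pi_1(a)-\sum_k\pi_1(T_{x_k})\pi_1(T_{\phi_k})$ vanishes on degree $0$. On degree $\ge 2$ it also vanishes, since there $\pi_1(T_{x_k})\pi_1(T_{\phi_k})$ acts as $T_{x_k}T_{\phi_k}$ and $\sum_k T_{x_k}T_{\phi_k}=\Delta(a)\otimes\mathrm{id}$ on $X\otimes X^{\otimes n-1}$. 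On degree $1$, $\pi_1(T_{\phi_k})$ kills $X$, so $\pi_1(aP_0)$ restricted to $X$ is $\Delta(a)$. Hence $\psi:=\pi_1\circ i'\circ\iota_{\mathcal{I}}$ is the homomorphism $\mathcal{I}\to\mathcal{K}_R(T(\mathcal{X}))$ given by $\Delta$ composed with the degree-one corner embedding $\iota_{\mathcal{X},T(\mathcal{X})}:\mathcal{K}_R(\mathcal{X})\to\mathcal{K}_R(T(\mathcal{X}))$. Both components land in $\mathcal{K}$, and they are mutually orthogonal.

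The second step applies the standard property of quasi-homomorphisms whose two components take values in the ideal: $E(\alpha,\beta)=E(\alpha)-E(\beta)$ (\cite[Proposition 3.3]{CMR07}). This gives
\[
E(\pi)E(i'\iota_{\mathcal{I}}) = E(\iota_{R,T(\mathcal{X})})E(i) - E(\iota_{\mathcal{X},T(\mathcal{X})})E(\Delta).
\]

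The remaining, and main, step is to identify $E(\iota_{\mathcal{X},T(\mathcal{X})})E(\Delta)$ with $E(\iota_{R,T(\mathcal{X})})E(\mathcal{X})$, where $E(\mathcal{X})=E(\iota_{R,R\oplus\mathcal{X}})^{-1}E(\iota_{\mathcal{X},R\oplus\mathcal{X}})E(\Delta)$. I would factor both corner embeddings through the split functional embedding $R\oplus\mathcal{X}\hookrightarrow T(\mathcal{X})$ in degrees $0,1$, writing $\iota_{\mathcal{X},T(\mathcal{X})}=\iota_{R\oplus\mathcal{X},T(\mathcal{X})}\circ\iota_{\mathcal{X},R\oplus\mathcal{X}}$ and $\iota_{R,T(\mathcal{X})}=\iota_{R\oplus\mathcal{X},T(\mathcal{X})}\circ\iota_{R,R\oplus\mathcal{X}}$. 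Here the technical point is that $T(\mathcal{X})$ carries a functional homomorphism to some $R^{(J)}$, obtained by assembling the tensor-power functional homomorphisms from the tensor product construction, so Proposition 2.11 and the subsequent proposition apply. Substituting these factorizations yields the claim.
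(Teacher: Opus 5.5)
Your proposal is correct and follows the paper's proof essentially step for step. You restrict $\pi$ along $i'\circ\iota_{\mathcal{I},T(\mathcal{X})\cdot\mathcal{I}}$, identify the two components as $\iota_{R,T(\mathcal{X})}\circ i$ and $\iota_{\mathcal{X},T(\mathcal{X})}\circ\Delta$ (your degree-by-degree check of $\pi_1(aP_0)$ supplies the detail the paper leaves implicit), apply \cite[Proposition 3.3]{CMR07}, and conclude by factoring both corner embeddings through $\mathcal{K}_R(R\oplus\mathcal{X})$. The only superfluous point is the functional homomorphism $T(\mathcal{X})\to R^{(J)}$: this lemma needs only the invertibility of $E(\iota_{R,R\oplus\mathcal{X}})$, which Proposition 2.11 already gives from $\mathcal{U}:\mathcal{X}\to R^{(I)}$.
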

\begin{proof}
    Recall that $ E(\mathcal{X}) = E(\iota_{R, R\oplus\mathcal{X}})^{-1}\circ E(\iota_{\mathcal{X}, R\oplus\mathcal{X}}) \circ  E(\Delta)$ where $\Delta : \mathcal{I} \rightarrow \mathcal{K}_{R}(\mathcal{X})$ is the left action, $\iota_{\mathcal{X}, R\oplus\mathcal{X}} : \mathcal{K}_{R}(\mathcal{X}) \rightarrow \mathcal{K}_{R}(R\oplus \mathcal{X})$ is the lower right corner embedding and $\iota_{R, R\oplus\mathcal{X}} : R \rightarrow \mathcal{K}_{R}(R\oplus\mathcal{X})$ is the upper left corner embedding. There is another natural inclusion $\iota_{\mathcal{X}, T(\mathcal{X})} : \mathcal{K}_{R}(\mathcal{X}) \rightarrow \mathcal{K}_{R}(T(\mathcal{X}))$.
    \[E(\pi)\circ E(i' \circ \iota_{\mathcal{I},T(\mathcal{X})\cdot\mathcal{I}}) = E(i'\circ \iota_{\mathcal{I},T(\mathcal{X})\cdot\mathcal{I}}, \pi_{1}\circ i'\circ \iota_{\mathcal{I},T(\mathcal{X})\cdot\mathcal{I}}) \]
For all $a \in \mathcal{I}$, write $\Delta(a) = \sum_{k} x_{k}\otimes \phi_{k}$ with $x_k \in X$ and $\phi_k \in X'$. We have
\[i'(\iota_{\mathcal{I},T(\mathcal{X})\cdot\mathcal{I}}(a)) = a\cdot p_0 = a\cdot \mathrm{id}_{T(\mathcal{X})} - \sum_{k} T_{x_{k}} T_{\phi_{k}} \]
By using the definition of $\pi_1$ we get  
$\pi_{1}(i'(\iota_{\mathcal{I},T(\mathcal{X})\cdot\mathcal{I}}(a))) = \iota_{\mathcal{X}, T(\mathcal{X})}(\Delta(a))$. The ring homomorphisms $\pi_0 \circ i'\circ\iota_{\mathcal{I},T(\mathcal{X})\cdot\mathcal{I}}$ and $\pi_1 \circ i'\circ\iota_{\mathcal{I},T(\mathcal{X})\cdot\mathcal{I}} : \mathcal{I} \rightarrow \mathcal{L}_R(T(\mathcal{X}))$ map into $\mathcal{K}_R(T(\mathcal{X}))$. \cite[Proposition 3.3]{CMR07} gives
\[E(\pi)\circ E(i' \circ\iota_{\mathcal{I},T(\mathcal{X})\cdot\mathcal{I}}) = E(\iota_{R,T(\mathcal{X})}\circ i)-E(\iota_{\mathcal{X},T(\mathcal{X})})\circ E(\Delta).
\]
It remains to show that $E(\iota_{R,T(\mathcal{X})})\circ E(\iota_{R, R\oplus\mathcal{X}})^{-1}\circ E(\iota_{\mathcal{X}, R\oplus\mathcal{X}}) = E(\iota_{\mathcal{X},T(\mathcal{X})})$. This is a direct consequence of the commutativity of the following diagram \[ \begin{tikzcd}
{\mathcal{K}_{R}(\mathcal{X})} \arrow[rr, "{\iota_{\mathcal{X}, R\oplus\mathcal{X}}}", hook] \arrow[rrdd, "{\iota_{\mathcal{X}, T(\mathcal{X})}}"', hook] &  & {\mathcal{K}_{R}(R\oplus\mathcal{X})} \arrow[dd, "{\iota_{R\oplus\mathcal{X}, T(\mathcal{X})}}"', hook] &  & R \arrow[ll, "{\iota_{R, R\oplus\mathcal{X}}}"', hook] \arrow[lldd, "{\iota_{R, T(\mathcal{X})}}", hook] \\
                                                                                                                                                            &  &                                                                                                           &  &                                                                                                          \\
                                                                                                                                                            &  & {\mathcal{K}_{R}(T(\mathcal{X}))}                                                                       &  &                                                                                                         
\end{tikzcd}\]
\end{proof}
\begin{theorem}
    Let $(E_n)_{n \in \mathbb{Z}} : \mathbf{Rings} \rightarrow \mathbf{Ab}$ be a sequence of homotopy invariant and $M$-stable functors satisfying excision. Let $R$ be a ring with local units. Let $\mathcal{X}$ be an $R$-correspondence and $\mathcal{I} \lhd R$ a two-sided ideal that has local units and acts on $\mathcal{X}$ on the left by compact operators. There is a long exact sequence
\[\begin{tikzcd}
\ldots \arrow[r] & E_{n}(\mathcal{I}) \arrow[rr, "E_n(i) - E_n(\mathcal{X})"] &  & E_{n}(R) \arrow[r, "E_n(j)"] & {{ E_{n}(\mathcal{O}_{\mathcal{X},\mathcal{I}}})} \arrow[r] & E_{n-1}(\mathcal{I}) \arrow[r] & \ldots
\end{tikzcd}\]
Here $j : R \rightarrow \mathcal{O}_{\mathcal{X}, \mathcal{I}}$ and $i : \mathcal{I} \rightarrow R$ are the natural inclusions.
\end{theorem}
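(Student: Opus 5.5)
We start from the extension of rings
\[0 \longrightarrow \mathcal{K}_R(T(\mathcal{X})\cdot \mathcal{I}) \xrightarrow{\ i'\ } \mathcal{T}_{\mathcal{X}} \longrightarrow \mathcal{O}_{\mathcal{X},\mathcal{I}} \longrightarrow 0\]
given by Proposition 2.18. Excision applied to it yields a long exact sequence
\[\cdots \to E_n(\mathcal{K}_R(T(\mathcal{X})\cdot\mathcal{I})) \xrightarrow{E_n(i')} E_n(\mathcal{T}_{\mathcal{X}}) \to E_n(\mathcal{O}_{\mathcal{X},\mathcal{I}}) \to E_{n-1}(\mathcal{K}_R(T(\mathcal{X})\cdot\mathcal{I})) \to \cdots\]
The plan is to replace the first two terms by isomorphic groups and identify the maps under these identifications.

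\textbf{Step 1: replace the ideal term.} By Proposition 2.11 and $M$-stability, the upper left corner embedding $\iota_{\mathcal{I},T(\mathcal{X})\cdot\mathcal{I}}$ induces an isomorphism $E_n(\mathcal{I})\cong E_n(\mathcal{K}_R(T(\mathcal{X})\cdot\mathcal{I}))$. This needs $T(\mathcal{X})\cdot\mathcal{I}=\mathcal{I}\oplus T^1(\mathcal{X}\cdot\mathcal{I})$ to satisfy (FS) and to admit a functional homomorphism into some $\mathcal{I}^{(J)}$. I would obtain (FS) from the local units of $\mathcal{I}$ together with (FS) of $\mathcal{X}$. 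The functional homomorphism would come from iterating the tensor-product construction of Section 2 applied to $\mathcal{U}$, followed by restriction to $\mathcal{I}$.

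\textbf{Step 2: replace the Toeplitz term.} Each $E_n$ is homotopy invariant and $M$-stable, and excision implies split-exactness. So Theorem 3.2 applies: $E_n(j_{\mathcal{T}})\colon E_n(R)\to E_n(\mathcal{T}_{\mathcal{X}})$ is an isomorphism, with inverse $E_n(\iota_{R,T(\mathcal{X})})^{-1}E_n(\pi)$ as shown in its proof.

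\textbf{Step 3: identify the maps.} Under these identifications the map $E_n(i')$ becomes
\[E_n(\iota_{R,T(\mathcal{X})})^{-1}\,E_n(\pi)\,E_n(i'\circ\iota_{\mathcal{I},T(\mathcal{X})\cdot\mathcal{I}}),\]
which equals $E_n(i)-E_n(\mathcal{X})$ by Lemma 3.3. For the map into $E_n(\mathcal{O}_{\mathcal{X},\mathcal{I}})$, precompose the quotient map $q\colon\mathcal{T}_{\mathcal{X}}\to\mathcal{O}_{\mathcal{X},\mathcal{I}}$ with $E_n(j_{\mathcal{T}})$. This gives $E_n(q\circ j_{\mathcal{T}})=E_n(j)$, since the inclusion $R\to\mathcal{O}_{\mathcal{X},\mathcal{I}}$ is by definition $q\circ j_{\mathcal{T}}$. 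The boundary map is the excision boundary composed with $E_{n-1}(\iota_{\mathcal{I},T(\mathcal{X})\cdot\mathcal{I}})^{-1}$. Exactness is preserved because each term has been replaced through a commuting isomorphism of sequences.

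\textbf{Expected obstacle.} The main subtlety is Step 1: checking that $T(\mathcal{X})\cdot\mathcal{I}$ really satisfies the hypotheses of Proposition 2.11. In particular, we need the required functional homomorphism to $\mathcal{I}^{(J)}$ rather than to $R^{(J)}$, and we need (FS) over $\mathcal{I}$. A second point is that Lemma 3.3 was stated for a single functor $E$, so it must be applied degreewise to each $E_n$. A third point is that $E_n(\mathcal{X})$, defined using the $\mathcal{I}$-$R$ correspondence structure, is the map appearing in the statement.
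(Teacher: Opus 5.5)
Your proposal is correct and follows the same route as the paper. You apply excision to $\mathcal{K}_R(T(\mathcal{X})\cdot\mathcal{I})\to\mathcal{T}_{\mathcal{X}}\to\mathcal{O}_{\mathcal{X},\mathcal{I}}$, replace the ideal term via the corner embedding of Proposition 2.11, replace the Toeplitz term via Theorem A with inverse $E_n(\iota_{R,T(\mathcal{X})})^{-1}E_n(\pi)$, and identify the first map as $E_n(i)-E_n(\mathcal{X})$ by Lemma 3.3. Your additional checks fill in points the paper leaves implicit: the functional homomorphism into $\mathcal{I}^{(J)}$, condition (FS) for $T(\mathcal{X})\cdot\mathcal{I}$, and excision implying split-exactness.
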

We now show how some long exact sequences of KH and HP groups of some specific classes of rings can be obtained directly when realizing these specific rings as algebraic Cuntz-Pimsner rings. We fix $(E_n)_{n \in \mathbb{Z}} : \mathbf{Rings} \rightarrow \mathbf{Ab}$ a sequence of homotopy invariant, stable functors satisfying excision. As a direct corollary of Theorem B we get (see \cite[Example~5.5]{CO11} for details)
\begin{theorem}{(Pimsner-Voiculescu)}\\
    Let $R$ be a ring with local units, let $\alpha : R \rightarrow R$ be a ring automorphism. Let $R\rtimes_{\alpha}\mathbb{Z}$ be the crossed product ring of $R$ by $\alpha$. There is a long exact sequence : \[\begin{tikzcd}
\ldots \arrow[r] &  E_{n}(R) \arrow[r, "{1 - E_n(\alpha)}"] &  E_{n}(R) \arrow[r, "E_n(j)"] & { E_{n}(R\rtimes_{\alpha}\mathbb{Z})} \arrow[r] & E_{n-1}(R) \arrow[r] & \ldots
\end{tikzcd}\]
\end{theorem}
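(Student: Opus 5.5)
We obtain the Pimsner–Voiculescu sequence by applying Theorem B to the $R$-correspondence $\mathcal{X}_\alpha = (R,R,\mu_\alpha)$ built from $\alpha$, with $\mathcal{I} = R$. Here $X = R$ carries the right action $x\cdot r = x r$ and the left action $\Delta(a)x = \alpha(a)x$; equivalently, one twists one side by $\alpha$, following \cite[Example~5.5]{CO11}. Correspondingly $X' = R$ carries the twisted right action $\phi \cdot a = \phi\alpha(a)$, and the pairing is $g(\phi\otimes x)=\phi x$. The steps are as follows.

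\textbf{Step 1: $\mathcal{X}_\alpha$ satisfies the hypotheses.} Condition (FS) holds because $R$ has local units: given finitely many elements, choose an idempotent $e$ acting as a unit on all of them, and take $\Theta_1=\Theta_2 = e\otimes e$. Non-degeneracy of the left action follows from surjectivity of $\alpha$, since $\Delta(R)X = \alpha(R)R = R$. The functional homomorphism to $R^{(I)}$ is the identity into $R^{(\{*\})} = R$. Compactness of $\Delta(\mathcal{I})$: for $a\in R$, pick an idempotent $e$ with $e\alpha(a)=\alpha(a)$. Then $\Delta(a) = \theta_{\alpha(a),e}$, which lies in $\mathcal{K}_R(\mathcal{X}_\alpha)\cong R$.

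\textbf{Step 2: identify the ring.} By Example 2.20 (the twisted case), $\mathcal{O}_{\mathcal{X}_\alpha,R}\cong R\rtimes_\alpha\mathbb{Z}$, and under this isomorphism $j$ is the canonical inclusion. With this, Theorem B gives an exact sequence
\[
E_n(R)\xrightarrow{E_n(\mathrm{id})-E_n(\mathcal{X}_\alpha)} E_n(R)\xrightarrow{E_n(j)} E_n(R\rtimes_\alpha\mathbb{Z})\to E_{n-1}(R).
\]

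\textbf{Step 3 (main point): compute $E_n(\mathcal{X}_\alpha)=E_n(\alpha)$.} Use Proposition 2.15 with $I=\{*\}$. This gives
\[
E_n(\mathcal{X}_\alpha)=E_n(\iota_{R,R})^{-1}E_n(\iota_{\mathcal{X},R})E_n(\Delta).
\]
Under $\mathcal{K}_R(\mathcal{X}_\alpha)=R\otimes_R R\cong R$ (via $x\otimes\phi\mapsto x\phi$), the map $\iota_{\mathcal{X},R^{(*)}}$ is the identity. Likewise $\iota_{R,R^{(*)}}$ is the identity $R\to M_1(R)$. The composite $R\xrightarrow{\Delta}\mathcal{K}_R(\mathcal{X}_\alpha)\cong R$ sends $a\mapsto\alpha(a)e\cdot$, which is $\alpha(a)$ as an element acting by left multiplication. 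Thus the composite is just $\alpha$, and $E_n(\mathcal{X}_\alpha)=E_n(\alpha)$.

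The obstacle is bookkeeping. One must check that the identification $\mathcal{K}_R(\mathcal{X}_\alpha)\cong R$ is a ring isomorphism compatible with the chosen side of the twist. With the twist placed on the left action as above, the product $(x_1\otimes\phi_1)(x_2\otimes\phi_2)=x_1\phi_1x_2\phi_2$ is untwisted, so no spurious $\alpha$ appears. If one instead follows the convention of Example 2.20 literally (left multiplication by $r$, right action by $\alpha(r)$), the same computation produces $\alpha^{-1}$. In that case one replaces $\alpha$ by $\alpha^{-1}$, which changes neither the crossed product up to isomorphism nor the exactness, since $1-E_n(\alpha^{-1}) = -E_n(\alpha)^{-1}(1-E_n(\alpha))$ and the two maps have the same kernel and image.
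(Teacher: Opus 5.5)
Your proposal is correct and takes the paper's route. The paper simply calls the result a direct corollary of Theorem B, applied to the $\alpha$-twisted correspondence $(R,R,\mu)$ with $\mathcal{I}=R$ and deferring details to \cite[Example~5.5]{CO11`}; you additionally verify the hypotheses and compute $E_n(\mathcal{X}_\alpha)=E_n(\alpha)$ via the one-point case of Proposition~2.13. You also correctly note that putting the twist on the right, as in the paper's crossed-product example, yields $1-E_n(\alpha^{-1})$, which has the same kernel and image as $1-E_n(\alpha)$; the paper leaves this bookkeeping implicit.
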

Let $Q=(Q^{0},Q^{1},r,s)$ be a quiver. Recall that a vertex $v\in Q^{0}$ is
regular if $0<|s^{-1}(v)|<\infty$, and let $\rho(Q)\subseteq Q^{0}$ denote
the set of regular vertices. It is known \cite[Example~5.8]{CO11} that the
Leavitt path algebra $L_{k}(Q)$ is the algebraic Cuntz-Pimsner ring associated
to the ring $R$, the ideal $\mathcal I$, and the $R$-correspondence
$\mathcal X=(X,X',\langle\cdot,\cdot\rangle)$ defined by
\[
R=\bigoplus_{v\in Q^{0}} k\cdot 1_{v},\qquad
\mathcal I=\bigoplus_{v\in\rho(Q)} k\cdot 1_{v},\qquad
X=\bigoplus_{e\in Q^{1}} k\cdot 1_{e},\qquad
X'=\bigoplus_{e\in Q^{1}} k\cdot 1_{e^{*}}.
\]
The evaluation map is given, for $e,f\in Q^{1}$, by
\[
\langle 1_{e^{*}},1_{f}\rangle=\delta_{e,f}\,1_{r(e)}
\]
and the bimodule structures are determined on generators by
\begin{alignat*}{2}
1_{e}\cdot 1_{v} &= \delta_{r(e),v}\,1_{e}, \qquad
&1_{v}\cdot 1_{e} &= \delta_{s(e),v}\,1_{e}, \\
1_{e^{*}}\cdot 1_{v} &= \delta_{s(e),v}\,1_{e^{*}}, \qquad
&1_{v}\cdot 1_{e^{*}} &= \delta_{r(e),v}\,1_{e^{*}}.
\end{alignat*}
The functional homomorphism of the correspondence is given by the maps $U :X \rightarrow R^{(Q^1)}$ and $V : X' \rightarrow R^{(Q^1)}$  defined by $U(1_e) = (\delta_{e,f}1_{r(e)})_{f\in Q^1}$ and $V(1_{e^*}) =(\delta_{e,f}1_{r(e)})_{f\in Q^1}$ for all $e\in Q^1$.

Let $N_{Q}'=(n_{x,y})_{x,y\in Q^{0}}$ be the adjacency matrix of $Q$, where
$n_{x,y}$ is the number of arrows from $x$ to $y$. Let $N_{Q}$ be the matrix
obtained from $N_{Q}'$ by removing the columns indexed by elements of $Q^{0}\setminus\rho(Q)$.

\begin{theorem}
    Let $k$ be a ring with local units. There is a long exact sequence $(n\in \mathbb{Z})$ \[\begin{tikzcd}
\ldots \arrow[r] & E_{n}(k)^{(\rho(Q))} \arrow[rr, "E_n(i) - N_Q"] &  & E_{n}(k)^{(Q^{0})} \arrow[r, "E_n(j)"] & E_n(L_k(Q)) \arrow[r] & \ldots
\end{tikzcd}\]
\end{theorem}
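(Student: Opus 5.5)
The plan is to deduce this as a direct specialization of Theorem B (the preceding theorem), using the realization of $L_k(Q)$ as $\mathcal{O}_{\mathcal{X},\mathcal{I}}$ recalled above from \cite[Example~5.8]{CO11}. Three things must be checked. First, the hypotheses of Theorem B hold. Second, the groups can be identified: $E_n(R)\cong E_n(k)^{(Q^0)}$ and $E_n(\mathcal{I})\cong E_n(k)^{(\rho(Q))}$. Third, under these identifications $E_n(i)-E_n(\mathcal{X})$ becomes $E_n(i)-N_Q$, with $E_n(i)$ the inclusion of the $\rho(Q)$ summands.

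For the hypotheses: $R$ and $\mathcal{I}$ have local units, since $k$ does and finite sums of the idempotents $e\cdot 1_v$ give local units. Condition (FS) holds: given finitely many elements of $X$, they involve finitely many edges $e$, and $\Theta=\sum_e 1_e\otimes (u_e 1_{e^*})$ with suitable local units $u_e$ of $k$ fixes them; $X'$ is handled similarly. The functional homomorphism $(U,V)$ to $R^{(Q^1)}$ is given above. Non-degeneracy of the left action holds because $1_{s(e)}1_e=1_e$. For $v$ regular, $\Delta(1_v)=\sum_{s(e)=v}1_e\otimes 1_{e^*}$ is a finite sum, so $\mathcal{I}$ acts by compact operators.

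For the identification of groups, I expect $E_n$ to commute with direct sums over the index set. For finite sums this is additivity, which follows from split-exactness. For infinite index sets I would either assume $E_n$ commutes with filtering colimits (true for $KH$ and $HP$), or write $R=\varinjlim$ of finite sums; if necessary, state the result as in \cite{ABC09} with the understanding that $E_n(k)^{(S)}$ denotes $E_n(\bigoplus_S k)$.

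The main step is computing $E_n(\mathcal{X})$ on the summand $E_n(k\cdot 1_v)$ for $v\in\rho(Q)$. By Proposition 2.14, $E_n(\mathcal{X})=E_n(\iota_{R,R^{(Q^1)}})^{-1}E_n(\iota_{\mathcal{X},R^{(Q^1)}})\circ E_n(\Delta)$. The composite $k\cong k1_v\to M_{Q^1}(R)$ sends $\lambda$ to the diagonal matrix with entry $\lambda 1_{r(e)}$ in position $(e,e)$ for each $e\in s^{-1}(v)$. This map is a finite sum of mutually orthogonal homomorphisms, each a corner embedding $k\to k1_{r(e)}\subset R$ followed by $\sigma_e$. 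By orthogonal additivity \cite[Proposition 3.3]{CMR07} and $M$-stability (independence of the corner, as noted after Definition 2.10), $E_n(\mathcal{X})$ sends the $v$-th generator copy of $E_n(k)$ to $\sum_{s(e)=v}$ of the inclusion at $r(e)$. In total this is $\sum_w n_{v,w}$ into the $w$-th summand, i.e.\ multiplication by the column of $N_Q$ indexed by $v$ (up to the transpose convention of $N_Q$). The main obstacle I anticipate is being careful that the intermediate matrix embeddings are compatible corner embeddings, so that the orthogonal sum really decomposes as claimed. Substituting into Theorem B then gives the stated sequence.
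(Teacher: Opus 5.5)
Your proposal is correct and follows the paper's own argument. You specialize Theorem B to the realization of $L_k(Q)$ from \cite[Example~5.8]{CO11}, and you compute $E_n(\mathcal{X})$ through Proposition 2.13 (your ``2.14'') by noting that $\rho(\Delta(1_v))$ is the diagonal matrix with entries $1_{r(e)}$ at positions $(e,e)$, $e\in s^{-1}(v)$.

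You also check the hypotheses of Theorem B, invoke orthogonal additivity and corner-independence explicitly, and note that identifying $E_n(k^{(Q^0)})$ with $E_n(k)^{(Q^0)}$ needs compatibility with infinite direct sums. That compatibility holds for $KH$, but it is not automatic for an arbitrary excisive $E_n$, nor for $HP$, which does not commute with filtered colimits in general. These additions make explicit what the paper leaves implicit.
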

\begin{proof}

We just have to prove that the map \[E_n(\mathcal{X}) : E_n(\mathcal{I}) = E_{n}(k)^{(\rho(Q))} \rightarrow E_n(R) = E_n(k)^{(Q^0)}\] equals $N_Q$. $\mathcal{K}_R(\mathcal{X})$ is the set of $k$-linear combinations of elements of the form $1_e \otimes 1_{f^*}$ for $e,f \in Q^1$. The functional homomorphism of $\mathcal{X}$ described above induces the ring homomorphism $\rho : \mathcal{K}_R(\mathcal{X}) \rightarrow M_{Q^1}(R)$ which sends every element of the form $1_e\otimes 1_{f^*}$ to the matrix $(\delta_{e,e'}1_{r(e)}\otimes \delta_{f,f'}1_{r(f)})_{e',f'\in Q^1} = \delta_{e,f}1_{r(e)}$.
By Proposition 2.13, $E_n(\mathcal{X})$ equals the following map \[\begin{tikzcd}
E_n(\mathcal{I}) \arrow[r, "E_n(\Delta)"] & E_n(\mathcal {K}_R(\mathcal{X})) \arrow[r, "E_n(\rho)"] & E_n(M_{Q^1}(R)) \arrow[r, "\sim"] & E_n(R)
\end{tikzcd}\]
Here $\Delta : \mathcal{I} \rightarrow \mathcal {K}_R(\mathcal{X})$ is the left action. 
For all $v\in \rho(Q)$,
\[
\rho(\Delta(1_v))
= \rho\Big(\displaystyle\sum_{e\in s^{-1}(v)} 1_e \otimes 1_{e^*}\Big)
= \displaystyle\sum_{e\in s^{-1}(v)} 1_{r(e)}.
\qedhere \]\end{proof}

Nekrashevych algebras were introduced originally for $C^*$-algebras as Cuntz-Pimsner algebras naturally associated to self-similar groups \cite{Nek04}. Analogues in the discrete setting were recently found \cite{SS23}. Let $X$ be a finite set with $|X|\ge 2$, and let $X^{*}$ be the free monoid generated by $X$.
A self-similar group over $X$ is a group $G$ together with a faithful action on $X^{*}$ by length-preserving permutations such that, for every $g\in G$ and every $x\in X$, there exists an element $g|_{x}\in G$ satisfying
\[
g(xw)=g(x)\,g|_{x}(w)
\qquad\text{for all } w\in X^{*}.
\]

Let $k$ be a field, let $R = kG$ be the convolution algebra of $G$ with coefficient in $k$, let \[R = \bigoplus_{g\in G} k\cdot g = k^{(G)}, \qquad X = R^{(X)} = k^{(G\times X)}, \qquad X' = R^{(X)}\]
We define the $k$-bilinear scalar product 
\[
\Bigl\langle
\sum_{x\in X} \lambda_x\, x \cdot g_x,\;
\sum_{x\in X} \mu_x\, x \cdot h_x
\Bigr\rangle
=
\sum_{x\in X} \lambda_x \mu_x\, g_{x}^{-1}h_x
\]
Here $X$ and $X'$ are respectively right and left free $R$-modules, but as a left $R$-module we define $X$ by the relation $g\cdot x = g(x) \cdot g|_{x}$ for all $g\in G, x \in X$ (we have $g|_{x} \in G$) using the self-similarity condition.

The Nekrashevych algebra \cite{SS23} associated to the self-similar group $G$ (and the set $X$) with coefficients in $k$ is easily seen to be isomorphic to the algebraic Cuntz-Pimsner ring associated to the ring $R$ and the correspondence $\mathcal{X} = (X,X',\langle,\rangle)$ \[\mathcal{N}_k(G,X) = \mathcal{O}_{\mathcal{X}}.\]
As $X$ is finite, the left action obviously maps into the compact operators. The ring of compact operators is just $M_d(kG)$ where $d=|X|$.

\begin{theorem}
    For any self-similar group $(G,X)$ there is a long exact sequence $(n\in \mathbb{Z})$ \[\begin{tikzcd}
\ldots \arrow[r] & E_{n}(kG) \arrow[rr, "1 - E_n(\mathcal{X})"] &  & E_{n}(kG) \arrow[r, "E_n(j)"] & {E_n(\mathcal{N}_k(G,X))} \arrow[r] & \ldots
\end{tikzcd}\]
\end{theorem}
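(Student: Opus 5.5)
This is a specialization of Theorem B (the general long exact sequence for relative Cuntz-Pimsner rings) with $R = kG$, $\mathcal{I} = R$ and the correspondence $\mathcal{X} = (X,X',\langle,\rangle)$ described above. The plan is to verify that the data satisfy the hypotheses of Theorem B and then read off the sequence.

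\textbf{Step 1: the ring.} $R = kG$ is unital with unit $1_G$, so it has local units. Taking $\mathcal{I} = R$, this ideal also has local units. By the identification $\mathcal{N}_k(G,X) = \mathcal{O}_{\mathcal{X}} = \mathcal{O}_{\mathcal{X},R}$ recalled above, the middle term of the sequence is the Nekrashevych algebra. Since $i = \mathrm{id}_{kG}$, the first map $E_n(i) - E_n(\mathcal{X})$ of Theorem B becomes $1 - E_n(\mathcal{X})$.

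\textbf{Step 2: $\mathcal{X}$ is an $R$-correspondence.} I will check the conditions of Definition 2.12.

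For condition (FS), $X \cong R^d$ as a right module and $X' \cong R^d$ as a left module, with the standard pairing $\langle e_x g, e_y h\rangle = \delta_{x,y} g^{-1}h$. Up to the involution $g \mapsto g^{-1}$ used on $X'$, this is the functional module $R^{(X)}$ of Example 2.6. Since $R$ is unital, the compact operator $\Theta = \sum_{x} e_x \otimes e_x$ acts as the identity on both $X$ and $X'$, so (FS) holds.

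For the left action, $g \cdot (x\cdot h) = g(x)\cdot g|_x h$ defines an action of $G$ by right-module automorphisms, and it extends linearly to $\Delta : kG \to \mathcal{L}_R(\mathcal{X})$. One must check that $\Delta(g)$ is adjointable, with adjoint given by the corresponding action on $X'$. This reduces to the identity $\langle g\cdot\phi, g\cdot x\rangle = \langle\phi, x\rangle$-type compatibility, which follows from the facts that $g$ permutes $X$ and that $(g^{-1})|_{g(x)} = (g|_x)^{-1}$. Non-degeneracy is immediate because $1_G$ acts as the identity. Finally, the identity maps $X = R^{(X)} \to R^{(X)}$ give the required functional homomorphism $\mathcal{U}$.

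The action lands in the compact operators because $\mathcal{K}_R(\mathcal{X}) = M_d(kG) = \mathcal{L}_R(\mathcal{X})$ when $X$ is finite.

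\textbf{Step 3: conclusion.} Apply Theorem B. The main obstacle is the careful verification in Step 2 that the left action is well defined and adjointable with respect to the $g^{-1}h$ scalar product. This uses the cocycle identities $(gh)|_x = g|_{h(x)}\, h|_x$ and $1|_x = 1$ of the self-similar action. Everything else is formal.

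I would not compute $E_n(\mathcal{X})$ any further; it stays the map induced by $\Delta : kG \to M_d(kG)$ followed by the stabilization isomorphism of Proposition 2.13.
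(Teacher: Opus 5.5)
Your proposal is correct and follows the same route as the paper: the theorem is Theorem B applied with $R=kG$ and $\mathcal{I}=R$. It uses the identification $\mathcal{N}_k(G,X)\cong\mathcal{O}_{\mathcal{X}}$ and the fact that the left action lands in $\mathcal{K}_R(\mathcal{X})=M_d(kG)$, so that $E_n(i)-E_n(\mathcal{X})$ becomes $1-E_n(\mathcal{X})$. Your checks of (FS), of the adjointability of $\Delta(g)$ via the cocycle identities, and of the functional homomorphism fill in details the paper leaves implicit.
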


\section*{Acknowledgements}
The author would like to thank Professor Ralf Meyer for insightful discussions and valuable comments on earlier versions of this paper.


\begin{thebibliography}{CORIMB}
\bibitem[Abr83]{Abr83}
G.~D.~Abrams,
\textit{Morita equivalence for rings with local units},
Comm. Algebra \textbf{11} (1983), 801--837.

\bibitem[A{\'A}M87]{AAM87}
P.~N.~\'Anh and L.~M\'arki,
\textit{Morita equivalence for rings without identity},
Tsukuba J. Math. \textbf{11} (1987), no.~1, 1--16.

\bibitem[ABC09]{ABC09}
P.~Ara, M.~Brustenga, and G.~Corti{\~n}as,
\textit{K-theory of Leavitt path algebras},
M\"unster J. Math. \textbf{2} (2009), 5--34.

\bibitem[Bur25]{Bur25}
B.~Burgstaller,
\newblock Corner embeddings into algebras of compact operators in operator K-theory,
\newblock \href{https://arxiv.org/abs/2501.11504}{arXiv:2501.11504}, 2025.

\bibitem[CO11]{CO11}
T.~M.~Carlsen and E.~Ortega,
\textit{Algebraic Cuntz--Pimsner rings},
Proc. London Math. Soc. \textbf{103} (2011), no.~4, 601--653.

\bibitem[CT06]{CT06}
G.~Corti{\~n}as and A.~Thom,
\textit{Bivariant algebraic K-theory},
arXiv:math/0603531, 2006.

\bibitem[C97]{C97}
J.~Cuntz,
\textit{Bivariante K-Theorie f\"ur lokalkonvexe Algebren und der Chern-Connes-Charakter},
Doc. Math. \textbf{2} (1997), 139--182.

\bibitem[CMR07]{CMR07}
J.~Cuntz, R.~Meyer, and J.~M.~Rosenberg,
\textit{Topological and bivariant K-theory},
Springer, 2007.

\bibitem[CQ97]{CQ97}
J.~Cuntz and D.~Quillen,
\textit{Excision in bivariant periodic cyclic cohomology},
Invent. Math. \textbf{127} (1997), no.~1, 67--98.

\bibitem[Mey]{Mey}
R.~Meyer.
\newblock Bicategories in Noncommutative Geometry.
\href{https://www.uni-math.gwdg.de/rameyer/website/Bicategories/Chapter3.pdf}{Author's webpage}.
\bibitem[Nek04]{Nek04}
V.~Nekrashevych,
\textit{Cuntz--Pimsner algebras of group actions},
J. Operator Theory \textbf{51} (2004), no.~2, 223--249.

\bibitem[Pim97]{Pim97}
M.~V.~Pimsner,
\textit{A class of C$^\ast$-algebras generalizing both Cuntz--Krieger algebras and crossed products by $\mathbb{Z}$},
in \textit{Free Probability Theory} (Waterloo, ON, 1995),
Fields Inst. Commun. \textbf{12}, Amer. Math. Soc., 1997, pp.~189--212.

\bibitem[Rie82]{Rie82}
M.~A.~Rieffel,
\textit{Morita equivalence for operator algebras},
in \textit{Proc. Sympos. Pure Math.} \textbf{38}, 1982, pp.~285--298.

\bibitem[SS23]{SS23}
B.~Steinberg and N.~Szak\'acs,
\textit{On the simplicity of Nekrashevych algebras of contracting self-similar groups},
Math. Ann. \textbf{386} (2023), no.~3, 1391--1428.

\bibitem[Wei89]{Wei89}
C.~A.~Weibel,
\textit{Homotopy algebraic K-theory},
in \textit{Algebraic K-theory and Algebraic Number Theory} (Honolulu, HI, 1987),
Contemp. Math. \textbf{83}, Amer. Math. Soc., 1989, pp.~461--488.
\end{thebibliography}
\end{document}